\newcommand{\Br}{\operatorname{Br}}
\newcommand{\Gal}{\operatorname{Gal}}
\newcommand{\Div}{\operatorname{Div}}
\newcommand{\divv}{\operatorname{div}}
\newcommand{\disc}{\operatorname{disc}}
\newcommand{\eps}{\varepsilon}
\newcommand{\G}{\mathcal{G}}
\newcommand{\GL}{\operatorname{GL}}
\newcommand{\ev}{\operatorname{ev}}
\newcommand{\Pic}{\operatorname{Pic}}
\newcommand{\PGL}{\operatorname{PGL}}
\newcommand{\End}{\operatorname{End}}
\newcommand{\op}{\operatorname{op}}
\newcommand{\Mat}{\operatorname{Mat}}
\newcommand{\Kbar}{\overline{K}}
\newcommand{\im}{{\operatorname{im}}}
\newcommand{\isom}{\cong}
\newcommand{\PP}{{\mathbb P}}
\newcommand{\ra}{{\longrightarrow}}
\newcommand{\Z}{{\mathbb Z}}
\newfont{\wncyr}{wncyr10 at 12pt}
\newfont{\wncyrten}{wncyr10 at 10pt}
\newcounter{nootje}
\newenvironment{Proof}{\par\noindent{\sc Proof:}}%
                      {\hspace*{\fill}\nobreak$\Box$\par\medskip}
                       {\hspace*{\fill}\nobreak$\Box$\par\medskip}
\newtheorem{Proposition}{Proposition}[section]
\newtheorem{Theorem}[Proposition]{Theorem}
\newtheorem{Lemma}[Proposition]{Lemma}
\newtheorem{Corollary}[Proposition]{Corollary}
\theoremstyle{definition}
\begin{document}

\title[On some algebras associated to genus one curves]%
{On some algebras associated to genus one curves}

\author{Tom~Fisher}
\address{University of Cambridge,
         DPMMS, Centre for Mathematical Sciences,
         Wilberforce Road, Cambridge CB3 0WB, UK}
\email{T.A.Fisher@dpmms.cam.ac.uk}

\date{25th July 2017}  

\begin{abstract}
Haile, Han and Kuo have studied certain non-commutative algebras
associated to a binary quartic or ternary cubic form.
We extend their construction to pairs of quadratic forms
in four variables, and conjecture a further generalisation to
genus one curves of arbitrary degree. These constructions give
an explicit realisation of an isomorphism relating the 
Weil-Ch\^atelet and Brauer groups of an elliptic curve.
\end{abstract}

\maketitle

\renewcommand{\baselinestretch}{1.1}
\renewcommand{\arraystretch}{1.3}
\renewcommand{\theenumi}{\roman{enumi}}

\section{Introduction}

Let $C$ be a smooth curve of genus one, written as either a double
cover of $\PP^1$ (case $n=2$), or as a plane cubic in $\PP^2$ (case $n=3$),
or as an intersection of two quadrics in $\PP^3$ (case $n=4$).
We write $C = C_f$ where $f$ is the binary quartic form, ternary cubic form, 
or pair of quadratic forms defining the curve.
In this paper we investigate a certain non-commutative algebra $A_f$
determined by $f$.

The algebra $A_f$ was defined in the case $n=2$ by Haile and Han
\cite{HH}, and in the case $n=3$ by Kuo \cite{Kuo}. We simplify some
of their proofs, and extend to the case $n=4$. We also conjecture a 
generalisation to genus one curves of arbitrary degree~$n$. 
The following theorem was already established in \cite{HH, Kuo} 
in the cases $n=2,3$.
We work throughout over a field $K$ of characteristic not 2 or 3.

\begin{Theorem}
\label{thm1}
If $n \in \{2,3,4\}$ then $A_f$ is an Azumaya algebra, 
free of rank $n^2$ over its centre. Moreover the centre of $A_f$
is isomorphic to the co-ordinate ring of $E \setminus \{0_E\}$ where
$E$ is the Jacobian elliptic curve of $C_f$.
\end{Theorem}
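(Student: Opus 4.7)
The plan is to extend the approach of Haile--Han \cite{HH} and Kuo \cite{Kuo} uniformly to $n \in \{2,3,4\}$, proceeding in three stages: first exhibit a candidate commutative subring $Z \subseteq A_f$ and identify it with the coordinate ring of $E \setminus \{0_E\}$; second show that $A_f$ is free of rank $n^2$ over $Z$ via an explicit basis; and third promote freeness plus generic simplicity to the Azumaya property using the Auslander--Goldman criterion.

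For the first stage, I would write down two natural elements $u, v \in A_f$ (built as low-degree words in the generators of $A_f$) and verify by direct computation that they commute and satisfy a Weierstrass equation. The coefficients of this equation should match the classical invariants of $f$---the $I, J$ of a binary quartic, the Aronhold $S, T$ of a ternary cubic, or their analogues for a pair of quadrics in four variables---so that the subring $K[u,v]$ is recognisable as the coordinate ring of $E \setminus \{0_E\}$ with $E$ the Jacobian of $C_f$.

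For the second stage, I would exhibit a specific set of $n^2$ monomials in the generators of $A_f$ and show that they form a free $Z$-basis. Spanning is obtained by applying the defining relations of $A_f$ as a rewriting system to reduce an arbitrary monomial to a $Z$-linear combination of the chosen $n^2$ representatives. Linear independence is then handled by specialising to the generic point of $\operatorname{Spec} Z$: the generic fibre should be a central simple algebra of dimension $n^2$ over the function field of $E$, which forces the $n^2$ classes to be $Z$-independent. Once freeness is established, the Azumaya property reduces, by Auslander--Goldman, to showing that every closed fibre is a central simple algebra of dimension $n^2$, which follows from a dimension count combined with simplicity of each fibre.

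The main obstacle, I expect, is the second stage in the case $n=4$: unlike the single-form presentations of the binary quartic and ternary cubic cases, a pair of quadrics produces a more complex algebra of relations, and extracting the correct $n^2$-element basis together with the straightening identities needed for spanning requires careful combinatorial work. Once that heart of the argument is in place, the identification of the centre reduces to invariant-theoretic identities encoding the Weierstrass model of the Jacobian, and the Azumaya conclusion follows formally.
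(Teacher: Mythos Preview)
Your outline is reasonable and broadly in the spirit of Haile--Han and Kuo, but it is \emph{not} the route the paper takes. The paper avoids any rewriting-system or monomial-basis analysis for a generic $f$. Instead it first proves (Theorem~\ref{thm:cob}) that $A_f$ is invariant up to isomorphism under the natural $\G_n$-action, then extends scalars so that every smooth $C_f$ is $\G_n$-equivalent to a member of a fixed one-parameter Weierstrass family, and finally exhibits, for that family, an explicit $K$-algebra isomorphism $A_f \cong \Mat_n(K[x_0,y_0])$ with $K[x_0,y_0]$ the affine coordinate ring of the Weierstrass curve. Freeness of rank $n^2$, identification of the centre, and the Azumaya property all fall out of this single isomorphism; no Auslander--Goldman fibre-by-fibre check is needed. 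For $n=4$ there is a further trick: rather than a direct $\Mat_4$ description, the paper shows $A_f \cong \Mat_2(A_{f'})$ where $f'$ is the binary quartic underlying the chosen quadric intersection, thereby reducing the case you flagged as hardest to the already-settled $n=2$ case.

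Your approach, if carried through, would be more intrinsic (working with the generic presentation rather than a normal form), but two points deserve care. First, your linear-independence step appeals to the generic fibre being central simple of dimension $n^2$; as written this is close to circular, since that dimension is part of what you are trying to establish---you would need an independent input, for instance an explicit surjection $A_f \to \Mat_n$ over some extension, to pin down the lower bound. Second, to conclude Azumaya via Auslander--Goldman you must verify simplicity at \emph{every} closed point of $\operatorname{Spec} Z$, not just generically; this is again where the paper's explicit matrix isomorphism does real work that a purely generic argument does not. The paper's change-of-coordinates plus specialisation strategy sidesteps both issues at the cost of the (Magma-assisted) computations in Sections~\ref{sec:cob} and~\ref{sec:pfthm1}.
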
 

Let $E/K$ be an elliptic curve. A standard argument (see
Section~\ref{sec:galcoh}) shows that the Weil-Ch\^atelet group of $E$
is canonically isomorphic to the quotient of Brauer groups
$\Br(E)/\Br(K)$. For our purposes it is more convenient to write this
isomorphism as
\begin{equation}
\label{myisom}
  H^1(K,E) \isom \ker \left( \Br(E) \stackrel{\ev_0}{\ra} \Br(K) \right). 
\end{equation}
where $\ev_0$ is the map that evaluates a Brauer class at 
$0 \in E(K)$.
The algebras we study explicitly realise this isomorphism.

\begin{Theorem}
\label{thm2}
If $n \in \{2,3,4\}$ then the isomorphism~\eqref{myisom} sends the 
class of $C_f$ to the class of $A_f$.
\end{Theorem}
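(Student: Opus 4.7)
The plan is to work generically: since $\Br(E) \hookrightarrow \Br(K(E))$ for the regular curve $E$, it suffices to compare the classes of $A_f$ and of the image of $[C_f]$ in $\Br(K(E))$. A preliminary step, implicit in the statement, is to verify that the Brauer class of $A_f$---defined a priori only over $E \setminus \{0_E\}$ by Theorem~\ref{thm1}---extends to a class in $\Br(E)$ lying in $\ker(\ev_0)$. This amounts to unramifiedness at $0_E$, and would be established by exhibiting an Azumaya algebra Brauer-equivalent to $A_f$ on a Zariski neighbourhood of $0_E$ which is visibly of matrix type there; the same local description yields $\ev_0[A_f]=0$ for free.

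For the main identification I would use the explicit description of the isomorphism \eqref{myisom} coming from the Leray edge map $H^1(K, \Pic(\bar E)) \hookrightarrow \Br(E)/\Br(K)$ attached to the structural morphism $E \to \operatorname{Spec} K$. Given an Azumaya algebra $\mathcal A$ on $E$, base change to $\bar K$: since Tsen's theorem forces $\Br(\bar E) = 0$, one has $\mathcal A \otimes_K \bar K \isom \End(\mathcal V)$ for some rank-$n$ vector bundle $\mathcal V$ on $\bar E$, well-defined up to twist by a line bundle. For each $\sigma \in \Gal(\bar K/K)$ the bundle $\sigma^* \mathcal V$ satisfies $\End(\sigma^*\mathcal V) = \sigma^*\End(\mathcal V) = \mathcal A\otimes_K \bar K$, so $\sigma^* \mathcal V \isom \mathcal V \otimes \mathcal L_\sigma$ for a unique $\mathcal L_\sigma \in \Pic(\bar E)$; the resulting cocycle $\sigma \mapsto \mathcal L_\sigma$ represents $[\mathcal A]$ in $H^1(K, \Pic(\bar E))$. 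Because Galois preserves degrees, $\mathcal L_\sigma$ already lies in $\Pic^0(\bar E) = E(\bar K)$, and under the isomorphism \eqref{myisom} this is the preimage of $[\mathcal A]$.

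Applying this recipe to $A_f$: the identification $\bar C_f \isom \bar E$, depending on a choice of basepoint $P_0 \in C_f(\bar K)$, should allow one to realize the required $\mathcal V$ as a rank-$n$ bundle pulled back from $\bar C_f$ via its embedding in $\PP^{n-1}$. The failure of $P_0$ to be Galois-equivariant---the 1-cocycle $\sigma \mapsto \sigma(P_0) - P_0 \in E(\bar K)$---is the standard representative of $[C_f]$, and a direct manipulation should match it with $\sigma \mapsto \mathcal L_\sigma$. The main obstacle is thus identifying $\mathcal V$ explicitly, i.e., unpacking the construction of $A_f$ and writing $A_f \otimes_K \bar K$ as $\End(\mathcal V)$ for a geometrically meaningful rank-$n$ bundle on $\bar C_f$ (presumably one of Atiyah's simple bundles of degree coprime to $n$, naturally carried by the projective embedding). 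This calculation is the substantive content of the theorem: in the cases $n=2,3$ it is accomplished in \cite{HH, Kuo}, and for $n=4$ one would proceed analogously using the description of $A_f$ from a pair of quadratic forms developed in the body of this paper.
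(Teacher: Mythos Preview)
Your framework is conceptually coherent and genuinely different from the paper's argument, but what you have written is a plan rather than a proof: the substantive step is deferred. You correctly observe that the content lies in identifying the rank-$n$ bundle $\mathcal V$ on $\bar E$ with $\End(\mathcal V)\cong A_f\otimes_K\bar K$ and in checking that the resulting cocycle $\sigma\mapsto\mathcal L_\sigma$ agrees with $\sigma\mapsto[\sigma P_0-P_0]$. But you do not carry this out; the phrases ``should allow one to realize'', ``a direct manipulation should match'', and ``one would proceed analogously'' mark exactly the places where the work is missing. In particular, there is no mechanism offered for producing $\mathcal V$ from the defining data of $A_f$ (the generators $r,s,t$ or $x,y$ or $p,q,r,s$ and their relations), nor any computation tying the relations to a specific Atiyah bundle. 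Appealing to \cite{HH,Kuo} for $n=2,3$ does not help: those papers do not proceed via your vector-bundle description either, and in any case the new content here is $n=4$.

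By contrast, the paper avoids vector bundles entirely and works directly in $\Br(K(E))$. After a change of coordinates putting $f$ into a convenient form, it identifies $A_f\otimes_Z K(E)$ explicitly as a cyclic algebra $(L/K,g)$ (for $n=2,3$, with $L$ a radical extension determined by a leading coefficient of $f$) or, for $n=4$, as a dihedral crossed-product algebra over a $D_8$-extension. It then computes $\operatorname{div}(g)$ and matches it, via an explicit point $Q\in C_f(L)$ and the covering-map formulae of \cite{AKM3P}, with the divisor prescribed by a cocycle-level description of the map~\eqref{myisom} (Lemmas~\ref{lem:cyc} and~\ref{lem:dih}). Your approach, if completed, would be more conceptual and would likely generalise more readily to higher $n$; the paper's approach has the virtue of actually producing, for each $n\le 4$, a finite and verifiable computation.
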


The following two corollaries were proved in~\cite{HH, Kuo}
in the cases $n=2,3$.

\begin{Corollary}
Let $n \in \{2,3,4\}$.
The genus one curve $C_f$ has a $K$-rational point if and only if the
Azumaya algebra $A_f$ splits over $K$.
\end{Corollary}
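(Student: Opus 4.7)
The plan is to deduce the corollary from Theorem~\ref{thm2}, together with two standard facts: that a genus one curve has a $K$-rational point if and only if its Weil--Ch\^atelet class is trivial, and that for a smooth curve $E/K$ the Brauer group restricts injectively to any dense open subscheme.

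First I would observe that, since $C_f$ is a principal homogeneous space under its Jacobian $E$, the condition $C_f(K) \neq \emptyset$ is equivalent to $[C_f] = 0$ in $H^1(K,E)$. Invoking the isomorphism~\eqref{myisom} together with Theorem~\ref{thm2}, this translates to $[A_f] = 0$ viewed as an element of $\ker(\ev_0) \subseteq \Br(E)$.

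Next I would relate this to the splitting of $A_f$ over $K$. By Theorem~\ref{thm1}, the centre of $A_f$ is the coordinate ring of $E \setminus \{0_E\}$, so the Brauer class of $A_f$ naturally lies in $\Br(E \setminus \{0_E\})$, and $A_f$ splits over $K$ precisely when this class is trivial. Since $E$ is a smooth curve, both $\Br(E)$ and $\Br(E \setminus \{0_E\})$ embed into $\Br(K(E))$ by purity for the Brauer group of a regular scheme, so the restriction map $\Br(E) \to \Br(E \setminus \{0_E\})$ is injective. It follows that $[A_f]$ vanishes in $\Br(E)$ if and only if it vanishes in $\Br(E \setminus \{0_E\})$, and chaining the equivalences yields the corollary.

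The only substantive content is Theorem~\ref{thm2}; once that is granted the corollary is a formal consequence, so I do not anticipate a real obstacle beyond recording the injectivity of the restriction map, which is standard for smooth curves.
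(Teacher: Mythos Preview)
Your proof is correct and follows the same approach as the paper: both deduce the corollary directly from Theorem~\ref{thm2} by noting that $C_f(K)\neq\emptyset$ iff $[C_f]=0$ in $H^1(K,E)$ iff $[A_f]=0$. The paper's proof is a single sentence, whereas you have spelled out the extra detail that the class of $A_f$ lives naturally in $\Br(E\setminus\{0_E\})$ and used injectivity of $\Br(E)\to\Br(E\setminus\{0_E\})$ to match this with the class in $\Br(E)$; this is implicit in the paper's formulation but worth making explicit.
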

\begin{proof} This is the statement that the class of $C_f$ in $H^1(K,E)$
is trivial if and only if the class of $A_f$ in $\Br(E)$ is trivial.
\end{proof}

For $0 \not= P \in E(K)$ we write $A_{f,P}$ for the specialisation
of $A_f$ at $P$. This is a central simple algebra over $K$ 
of dimension $n^2$.

\begin{Corollary}
\label{cor2}
Let $n \in \{2,3,4\}$.
The map $E(K) \to \Br(K)$ that sends $P$ to the class of $A_{f,P}$
is a group homomorphism.
\end{Corollary}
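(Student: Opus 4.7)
The plan is to deduce Corollary~\ref{cor2} from Theorem~\ref{thm2} via a structural property of Brauer classes on $E$ that are trivial at $0_E$. By Theorem~\ref{thm2}, $[A_f]$ gives a class $\alpha \in \Br(E)$ with $\ev_{0_E}(\alpha) = 0$, and by definition $[A_{f,P}] = \ev_P(\alpha)$ for $0_E \neq P \in E(K)$. So it suffices to prove the following general claim: for any $\alpha \in \Br(E)$ with $\ev_{0_E}(\alpha) = 0$, the map $P \mapsto \ev_P(\alpha)$ is a group homomorphism $E(K) \to \Br(K)$.

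For $Q \in E(K)$ let $t_Q \colon E \to E$ denote translation by $Q$. The heart of the argument is to show that $t_Q^*\alpha - \alpha$ lies in the constant subgroup $\Br(K) \subset \Br(E)$. I would argue this via the identification $\Br_1(E)/\Br(K) \isom H^1(K, \Pic(\overline{E})) \isom H^1(K, E)$ (coming from the Leray spectral sequence, with the last isomorphism using that $0_E \in E(K)$ splits the degree sequence for $\Pic(\overline{E})$ Galois-equivariantly). Translation preserves $\Pic^0(\overline{E}) \isom E(\overline{K})$ and acts trivially on it, since $t_Q^*([P] - [0_E]) = [P - Q] - [-Q] \sim [P] - [0_E]$ (the difference being a degree-zero divisor summing to $0_E$ in the group law, hence principal). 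Therefore $t_Q^*$ acts trivially on $H^1(K, E)$, and so $t_Q^*\alpha - \alpha \in \Br(K)$. Applying $\ev_{0_E}$ pins the constant down: it equals $\ev_Q(\alpha) - \ev_{0_E}(\alpha) = \ev_Q(\alpha)$.

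Combining, we obtain $t_Q^*\alpha = \alpha + \ev_Q(\alpha)$ in $\Br(E)$, and evaluating at $P \in E(K)$ gives $\ev_{P+Q}(\alpha) = \ev_P(\alpha) + \ev_Q(\alpha)$ as required. The main obstacle is verifying that $\alpha = [A_f]$ is algebraic, so that the identification through $H^1(K, \Pic(\overline{E}))$ applies; this follows from Theorem~\ref{thm2}, since the isomorphism \eqref{myisom} factors through $\Br_1(E)$ by construction. A secondary subtlety is that $A_f$ is \emph{a priori} only an Azumaya algebra over $E \setminus \{0_E\}$, and the extension to a class on all of $E$ is precisely the content of Theorem~\ref{thm2}.
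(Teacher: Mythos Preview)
Your argument is correct. The paper's own proof is a one-liner: it identifies $P \mapsto [A_{f,P}]$ with the map $P \mapsto \langle P, [C_f]\rangle$ for the Tate pairing $E(K) \times H^1(K,E) \to \Br(K)$ (this is exactly what Theorem~\ref{thm2} says), and then appeals to the standard fact that the Tate pairing is linear in its first argument.

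Your route is genuinely different in that you do not invoke the Tate pairing as a black box, but instead prove the additivity directly via the translation trick $t_Q^*\alpha - \alpha \in \Br(K)$. This is in fact one of the standard ways to \emph{prove} that the evaluation pairing $E(K) \times \Br(E)/\Br(K) \to \Br(K)$ is bilinear, so you are supplying the argument the paper leaves implicit. Your computation that $t_Q^*$ acts trivially on $H^1(K,\Pic(\overline{E}))$ is right, though note that on the full $\Pic$ (not just $\Pic^0$) translation is not literally the identity: under the splitting $\Pic(\overline{E}) \cong E(\overline{K}) \oplus \Z$ it acts by $(R,n)\mapsto(R - nQ,n)$, and the induced map on $H^1$ is trivial only because the extra term factors through $H^1(K,\Z)=0$. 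One simplification you could make: your worry that $\alpha$ be algebraic is unnecessary, since for a curve $\Br(\overline{E}) \hookrightarrow \Br(\overline{K}(E)) = 0$ by Tsen's theorem, so $\Br_1(E)=\Br(E)$ automatically. The paper's approach buys brevity by outsourcing to a known result; yours buys self-containment by reproving that result in situ.
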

\begin{proof}
By Theorem~\ref{thm2} the Tate pairing $E(K) \times H^1(K,E) \to \Br(K)$
is given by $(P,[C_f]) \mapsto [A_{f,P}]$. This corollary is the statement
that the Tate pairing is linear in the first argument.
\end{proof}

The algebras $A_f$ are interesting for several reasons.
They have been used to 
study the relative Brauer groups of curves
(see \cite{CK, Creutz, HHW, Kuo2}) 
and to compute the Cassels-Tate
pairing (see \cite{FN}). We hope they might also be
used to construct explicit Brauer classes on 
surfaces with an elliptic fibration. This could have important
arithmetic applications, extending for example~\cite{Wittenberg}.

In Sections~\ref{def:Af} and~\ref{sec:centre} we define the 
algebras $A_f$ and describe their centres. 
In Section~\ref{sec:cob} we show that these constructions behave
well under changes of co-ordinates. The proofs of 
Theorems~\ref{thm1} and~\ref{thm2} are given in Sections~\ref{sec:pfthm1}
and~\ref{sec:pf2}.

The hyperplane section $H$ on $C_f$ is a $K$-rational divisor of
degree $n$. Let $P \in E(K)$ where $E$ is the Jacobian of $C_f$.  In
Section~\ref{sec:geom} we explain how finding an isomorphism $A_{f,P}
\isom \Mat_n(K)$ enables us to find a $K$-rational divisor $H'$ on
$C_f$ such that $[H-H'] \mapsto P$ under the isomorphism $\Pic^0(C_f)
\isom E$. In the cases $n=2,3$ our construction involves some of the
representations studied in \cite{BH}.

Nearly all our proofs are computational in nature, and for this we
rely on the support in Magma \cite{magma} for finitely presented
algebras.  We have prepared a Magma file checking all our
calculations, and this is available online.  It would of course be
interesting to find more conceptual proofs of Theorems~\ref{thm1}
and~\ref{thm2}.

\section{The algebra $A_f$}
\label{def:Af}

In this section we define the algebras $A_f$ for $n=2,3,4$, and
suggest how the definition might be generalised to genus one curves of
arbitrary degree. The prototype for these constructions is the
Clifford algebra of a quadratic form.  We therefore start by recalling
the latter, which will in any case be needed for our treatment of the
case $n=2$.  We write $[x,y]$ for the commutator $xy - yx$.

\subsection{Clifford algebras}
\label{sec:clifford}
Let $Q \in K[x_1, \ldots,x_n]$ be a quadratic form.  The Clifford
algebra of $Q$ is the associative $K$-algebra $A$ generated by $u_1,
\ldots, u_n$ subject to the relations deriving from the formal
identity in $\alpha_1, \ldots, \alpha_n$,
\[(\alpha_1 u_1 + \ldots + \alpha_n u_n)^2 = Q(\alpha_1, \ldots,
\alpha_n).\] The involution $u_i \mapsto -u_i$ resolves $A$ into
eigenspaces $A = A_+ \oplus A_-$. By diagonalising $Q$, it may be
shown that $A$ and $A_+$ are $K$-algebras of dimensions $2^n$ and
$2^{n-1}$.  Moreover, rescaling $Q$ does not change the isomorphism
class of $A_+$.

In the case $n=3$ we let
\[ \eta = u_1 u_2 u_3 - u_3 u_2 u_1 = u_2 u_3 u_1 - u_1 u_3 u_2 = u_3
u_1 u_2 - u_2 u_1 u_3. \] Then $\eta$ belongs to the centre $Z(A)$,
and $\eta^2 = \disc Q$, where if $Q(x) = x^T M x$ then $\disc Q =
-4\det M$. Moreover, if $\disc Q \not=0$ then $A_+$ is a quaternion
algebra and $A = A_+ \otimes K[\eta]$.  Although not needed below, it
is interesting to remark that the well known map
\[ H^1(K,\PGL_2) \to \Br(K) \] is realised by sending the smooth conic
$\{Q = 0\} \subset \PP^2$ (which as a twist of $\PP^1$ corresponds to
a class in $H^1(K,\PGL_2)$) to the class of $A_+$.

\subsection{Binary quartics}
\label{AfBQ}
Let $f \in K[x,z]$ be a binary quartic, say
\[ f(x,z) = a x^4 + b x^3 z + c x^2 z^2 + d x z^3 + e z^4. \]
Haile and Han \cite{HH} define the algebra $A_f$ to be the
associative $K$-algebra generated by $r,s,t$
subject to the relations deriving from the formal identity 
in $\alpha$ and $\beta$,
\[   (\alpha^2 r + \alpha \beta s + \beta^2 t)^2 = f(\alpha,\beta). \]
Thus $A_f = K\{r,s,t\}/I$ where $I$ is the ideal generated by
the elements
\begin{align*}
& r^2 - a, \\
& rs + sr - b, \\
& rt + tr + s^2 - c,\\
& st + ts - d, \\
& t^2 - e.
\end{align*}

We have $[r,s^2] = [r,rs+sr] = [r,b] = 0$, and likewise
$[s^2,t]=0$. Therefore $\xi = s^2 - c$ belongs to the centre $Z(A_f)$.
By working over the polynomial ring $K[\xi]$, instead of the 
field $K$, we may describe $A_f$ as the Clifford algebra 
of the quadratic form
\[ Q_\xi(x,y,z) = a x^2 + b x y + c y^2 + d y z + e z^2 + \xi(y^2 - xz). \]
This quadratic form naturally arises as follows.
Let $C \subset \PP^3$ be the image of the curve $Y^2 = f(X,Z)$
embedded via $(x_1:x_2:x_3:x_4) = (X^2:XZ:Z^2:Y)$. 
Then $C$ is defined by a pencil of quadrics with 
generic member
$x_4^2 = Q_\xi(x_1,x_2,x_3)$.

\subsection{Ternary cubics}
\label{AfTC}
Let $f \in K[x,y,z]$ be a ternary cubic, say
\begin{align*}
f(x,y,z) & =  a x^3 + b y^3 + c z^3 + a_2 x^2 y + a_3 x^2 z \\
& ~\qquad \quad + \, b_1 x y^2 
+ b_3 y^2 z + c_1 x z^2  + c_2 y z^2 + m x y z. 
\end{align*}
In the special case $c=1$ and $a_3 = b_3 = c_1 = c_2 = 0$, 
Kuo \cite{Kuo} defines the algebra $A_f$ to be the 
associative $K$-algebra generated by $x$ and $y$ subject to
the relations deriving from the formal identity in $\alpha$ and
$\beta$,
\[   f(\alpha,\beta,\alpha x+ \beta y) = 0.  \]
We make the same definition for any ternary cubic 
$f$ with $c \not= 0$. 
Thus $A_f = K\{x,y\}/I$ where $I$ is the ideal generated by
the elements
\begin{align*}
& c x^3 + c_1 x^2 + a_3 x + a,\\
& c (x^2 y + x y x + y x^2) + c_1 (x y + y x) + c_2 x^2 + m x + a_3 y + a_2,\\
& c (x y^2 + y x y + y^2 x) + c_2 (x y + y x) + c_1 y^2 + m y + b_3 x + b_1,\\
& c y^3 + c_2 y^2 + b_3 y + b. 
\end{align*}

\subsection{Quadric intersections}
\label{AfQI}
Let $f=(f_1,f_2)$ be a pair of quadratic forms in four variables, say
$x_1, \ldots, x_4$.  Assuming $C_f = \{f_1 = f_2 = 0 \} \subset \PP^3$
does meet the line $\{ x_3=x_4=0 \}$, we define the algebra $A_f$ to
be the associative $K$-algebra generated by $p,q,r,s$ subject to the
relations deriving from the formal identities in $\alpha$ and $\beta$,
\begin{align}
\label{reln4-1}
   f_i(\alpha p + \beta r,\alpha q + \beta s,\alpha,\beta) &= 0, \quad i=1,2\\
\label{reln4-3}
     [\alpha p + \beta r, \alpha q + \beta s] &= 0.
\end{align}
Explicitly if $f_1 = \sum_{i \le j} a_{ij} x_i x_j$ and 
$f_2 = \sum_{i \le j} b_{ij} x_i x_j$ then
$A_f = K \{ p,q,r,s\}/I$ where $I$ is the ideal generated by the elements
\begin{align*}
&  a_{11} p^2 + a_{12} p q + a_{22} q^2 + a_{13} p + a_{23} q + a_{33}, \\
&  a_{11} (p r + r p) + a_{12} (p s + r q) + a_{22} (q s + s q) + a_{14} p + a_{24} q + a_{13} r + a_{23} s + a_{34}, \\
&  a_{11} r^2 + a_{12} r s + a_{22} s^2 + a_{14} r + a_{24} s + a_{44}, \\
&  b_{11} p^2 + b_{12} p q + b_{22} q^2 + b_{13} p + b_{23} q + b_{33}, \\
&  b_{11} (p r + r p) + b_{12} (p s + r q) + b_{22} (q s + s q) + b_{14} p + b_{24} q + b_{13} r + b_{23} s + b_{34}, \\
&  b_{11} r^2 + b_{12} r s + b_{22} s^2 + b_{14} r + b_{24} s + b_{44}, \\
&  p q - q p, \\
&  p s + r q - q r - s p, \\
&  r s - s r.
\end{align*} 
One motivation for including the commutator relation \eqref{reln4-3}
is that without it, the relations \eqref{reln4-1} would be ambiguous.

\subsection{Genus one curves of higher degree}
Let $C$ be a smooth curve of genus one. If $D$ is a $K$-rational
divisor on $C$ of degree $n \ge 3$ then the complete linear system
$|D|$ defines an embedding $C \to \PP^{n-1}$. We identify $C$ with its
image, which is a curve of degree $n$.  If $n = 3$ then $C$ is a plane
cubic, whereas if $n \ge 4$ then the homogeneous ideal of $C$ is
generated by quadrics.

Let $A$ be the associative $K$-algebra generated by $u_1,u_2, \ldots,
u_{n-2},v_1,v_2, \ldots, v_{n-2}$, subject to the relations deriving
from the formal identities in $\alpha$ and $\beta$,
\begin{align*}
   f(\alpha u_1 + \beta v_1,\alpha u_2 + \beta v_2, \ldots, 
   \alpha u_{n-2} + \beta v_{n-2},\alpha,\beta) &= 0 \quad \text{ for all }
f \in I(C),\\
     [\alpha u_i + \beta v_i, \alpha u_j + \beta v_j] &= 0 
  \quad \text{ for all } 1 \le i,j \le n-2.
\end{align*}
This definition may be thought of as writing down the conditions for
$C$ to contain a line. The fact that $C$ does not contain a line then
tells us that there are no non-zero $K$-algebra homomorphisms $A \to
K$.

We conjecture that the analogues of Theorems~\ref{thm1} and~\ref{thm2}
hold for these algebras.  In support of this conjecture, we have
checked that Theorem~\ref{thm1} holds in some numerical examples with
$n=5$.

\section{The centre of $A_f$}
\label{sec:centre}

In this section we exhibit some elements $\xi$ and $\eta$ in the
centre of $A_f$. In each case 
$\xi$ and $\eta$ generate the centre, and satisfy a relation in the
form of a Weierstrass equation for the Jacobian elliptic curve.

\subsection{Binary quartics} 
\label{ZBQ}
Let $C_f$ be a smooth curve of genus one defined as a double cover of
$\PP^1$ by $y^2 = f(x,z)$, where $f$ is a binary quartic.  It already
follows from the results in Sections~\ref{sec:clifford} and~\ref{AfBQ}
that the centre of $A_f$ is generated by $\xi = s^2 - c$ and $\eta = r
s t - t s r$.  Alternatively, this was proved by Haile and Han
\cite{HH} for quartics with $b=0$, and the general case follows by
making a change of co-ordinates (see Section~\ref{sec:cob}).  The
elements $\xi$ and $\eta$ satisfy $\eta^2 = F(\xi)$ where
\begin{equation}
\label{jac2cubic}
 F(x) =  x^3 + c x^2 - (4 a e - bd) x - 4 ace + b^2 e + ad^2. 
\end{equation}
This is a Weierstrass equation for the Jacobian of $C_f$.

There is a derivation $D: A_f \to A_f$ defined on the generators
$r,s,t$ by $Dr = [s,r]$, $Ds = [t,r]$ and $Dt=0$. To see this is well
defined, we checked that the derivation acts on the ideal of relations
defining $A_f$.  It is easy to see that $D$ must act on the centre of
$A_f$.  We find that $D \xi = 2 \eta$ and $D \eta = 3 \xi^2 + 2 c \xi
- (4 a e - b d)$.

\subsection{Ternary cubics} 
\label{ZTC}
Let $C_f \subset \PP^2$ be a smooth curve of genus one defined by a
ternary cubic $f$. 
With notation as in Section~\ref{AfTC},
the centre of $A_f$ contains
\[ \xi 
    = c^2 (x y)^2   - (c y^2 + c_2 y + b_3) (c x^2 + c_1 x + a_3) 
                               + (c m - c_1 c_2) x y + a_3 b_3.  \]
There is a derivation $D: A_f \to A_f$ defined on the
generators $x,y$ by $Dx = c[xy,x]$ and $Dy = c[y,yx]$.
Let $a'_1, a'_2,a'_3,a'_4, a'_6 \in \Z[a,b,c, \ldots, m]$ be
the coefficients of a Weierstrass equation for the Jacobian of $C_f$,
as specified in \cite[Section 2]{minred}, i.e.
$a'_1 =  m$, $a'_2 =  -(a_2 c_2+a_3 b_3+b_1 c_1)$, 
$a'_3 =  9 a b c - (a b_3 c_2 + b a_3 c_1 + c a_2 b_1) 
   - (a_2 b_3 c_1 + a_3 b_1 c_2), a'_4 = \ldots$.
(These formulae were originally given in \cite{ARVT}.)
Then $\eta = \frac{1}{2}(D \xi - a'_1 \xi - a'_3)$ is also in the
centre of $A_f$, and these elements satisfy
\begin{equation*}
\eta^2 + a'_1 \xi \eta + a'_3 \eta = \xi^3 + a'_2 \xi^2 + a'_4 \xi + a'_6. 
\end{equation*}

In fact $\xi$ and $\eta$ generate the centre of $A_f$. This was proved
by Kuo \cite{Kuo} in the case $c=1$ and $a_3 = b_3 = c_1 = c_2 = 0$. 
The general case follows by making a change of
co-ordinates (see Section~\ref{sec:cob}). 

\subsection{Quadric intersections}
\label{ZQI}
Let $C_f \subset \PP^3$ be a smooth curve of genus one defined by 
a pair of quadratic forms $f=(f_1,f_2)$.
Let $a_1, \ldots, a_{10}$ and $b_1, \ldots, b_{10}$ be the coefficients
of $f_1$ and $f_2$, where we take the monomials in the order
\[ x_1^2,\,\,\, x_1 x_2,\,\,\, x_1 x_3,\,\,\, x_1 x_4,\,\,\, x_2^2,\,\,\, 
   x_2 x_3,\,\,\, x_2 x_4,\,\,\, x_3^2,\,\,\, x_3 x_4,\,\,\, x_4^2. \]
Let $d_{ij} = a_i b_j - a_j b_i$. With notation as in Section~\ref{AfQI}
we put 
\begin{align*}
p_i &= d_{1i} p + d_{2i} q + d_{3i}, &
r_i &= d_{1i} r + d_{2i} s + d_{4i}, \\
q_i &= d_{2i} p + d_{5i} q + d_{6i}, &
s_i &= d_{2i} r + d_{5i} s + d_{7i} 
\end{align*}
and $t = qr - ps = rq - sp$. Then 
\begin{align*}
\xi = (& p_5  s)^2 + (s_1  p)^2 \\
    &+ (d_{56} p_4 + d_{29} p_5 + d_{37} p_5 - d_{27} p_6) s 
    - d_{56} (d_{13} r + d_{23} s - d_{17} q + d_{12} t - d_{19}) s \\
    & + (d_{14} s_6 + d_{29} s_1 - d_{37} s_1 - d_{23} s_4) p 
     - d_{14} (d_{27} p + d_{57} q + d_{35} r - d_{25} t - d_{59}) p
\end{align*}
belongs to the centre of $A_f$. We give a slightly simpler expression
for $\xi$ in Section~\ref{sec:cob4}, but this alternative 
expression is only valid when 
$t$ is invertible.

There is a derivation $D : A_f \to A_f$ defined on the generators
$p,q,r,s$ by $Dp =\tfrac{1}{2}[p,\eps]$, $Dq = \tfrac{1}{2}[q,\eps]$
and $Dr = Ds=0$ where
 \[ \eps
    = d_{12} (p r + r p) 
    + d_{15} (p s + q r + s p + r q) + d_{25} (q s + s q). \]
Then $\eta = \tfrac{1}{2}D\xi$ is also in the centre of $A_f$.
We show in Section~\ref{sec:pfthm1} that $\xi$ and $\eta$ generate the
centre, and that they satisfy
a Weierstrass equation for the Jacobian of~$C_f$.

\section{Changes of co-ordinates}
\label{sec:cob}

In this section we show that making a change of coordinates
does not change the isomorphism class of the algebra $A_f$.
We also describe the effect this has on the central elements $\xi$ and $\eta$, 
and on the derivation $D$.

Let $\G_2(K) = K^\times \times \GL_2(K)$ act on the space of
binary quartics via
\[ (\lambda,M): f(x,z) \mapsto \lambda^2 
     f(m_{11} x + m_{21} z, m_{12} x + m_{22} z). \]
Let $\G_3(K) = K^\times \times \GL_3(K)$ act on the space of
ternary cubics via
\[ (\lambda,M): f(x,y,z) \mapsto \lambda 
     f(m_{11} x + m_{21} y + m_{31}z , \ldots, m_{13} x + m_{23} y + m_{33}z ). \]
Let $\G_4(K) = \GL_2(K) \times \GL_4(K)$ act on the space of
quadric intersections via
\begin{align*}
 (\Lambda,I_4): (f_1,f_2) &\mapsto (\lambda_{11} f_1 + \lambda_{12} f_2,
\lambda_{21} f_1 + \lambda_{22} f_2 ), \\
 (I_2,M): (f_1,f_2) &\mapsto 
     (f_1(\textstyle\sum_{i=1}^4 m_{i1} x_i, \ldots),
     f_2(\textstyle\sum_{i=1}^4 m_{i1} x_i, \ldots)).
\end{align*}
We write $\det(\lambda,M) = \lambda \det M$ in the cases $n=2,3$, and
$\det(\Lambda,M) = \det \Lambda \det M$ in the case $n=4$.  A {\em
  genus one model} is a binary quartic, ternary cubic, or pair of
quadratic forms, according as $n = 2,3$ or $4$.

\begin{Theorem}
\label{thm:cob}
Let $f$ and $f'$ be genus one models of degree $n \in \{2,3,4\}$.
In the case $n=3$ we suppose that $f(0,0,1) \not= 0$ and $f'(0,0,1) \not= 0$.
In the case $n=4$ we suppose that $C_f$ and $C_{f'}$ do not
meet the line $\{ x_3 = x_4 = 0 \}$. 
If $f' = \gamma f$ for some $\gamma \in \G_n(K)$ then 
there is an isomorphism $\psi : A_{f'} \to A_f$ 
with
\begin{align} 
\label{tr:xi}
       \xi &\mapsto (\det \gamma)^2 \xi + \rho \\
\label{tr:eta}
\eta &\mapsto (\det \gamma)^3 \eta + \sigma \xi + \tau
\end{align}
for some $\rho,\sigma,\tau \in K$, with $\sigma=\tau=0$ if $n \in \{2,4\}$. 
Moreover there exists $\kappa \in A_f$ such that
\begin{equation}
\label{tr:D}
 \psi D(x) = (\det \gamma) D \psi (x) + [\kappa,\psi(x)] 
\end{equation}
for all $x \in A_{f'}$. 
\end{Theorem}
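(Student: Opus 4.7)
The plan is to construct $\psi$ explicitly by substituting variables in the defining identities for $A_{f'}$ according to $\gamma$, and then to verify the four conclusions by direct computation (with Magma handling the bookkeeping). The construction is uniform in $\gamma$, so no reduction to generators of $\G_n(K)$ is required.

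For $n=2$ with $\gamma=(\lambda,M)$, one applies the identity $(\alpha'^2 r + \alpha'\beta' s + \beta'^2 t)^2 = f(\alpha',\beta')$ at $\alpha' = m_{11}\alpha+m_{21}\beta$, $\beta' = m_{12}\alpha+m_{22}\beta$; expanding the left-hand side as a quadratic form in $(\alpha^2,\alpha\beta,\beta^2)$ and dividing by $\lambda^2$ reads off the images $\psi(r'), \psi(s'), \psi(t')$ as rescaled quadratic expressions in $r,s,t$. The cases $n=3$ and $n=4$ proceed identically, substituting in $f(\alpha,\beta,\alpha x+\beta y)=0$ and in (\ref{reln4-1})--(\ref{reln4-3}) respectively; the hypotheses on $f$ and $f'$ in the statement simply ensure that both $A_f$ and $A_{f'}$ are defined. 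In case $n=4$ the $(\Lambda, I_4)$ part is handled by setting $\psi$ to be the identity on generators, since changing the basis of the pencil $\langle f_1, f_2 \rangle$ does not affect the ideal of relations. Magma then confirms that $\psi$ sends the defining relations of $A_{f'}$ into the defining ideal of $A_f$, and the analogous construction for $\gamma^{-1}$ provides the inverse.

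Next one applies $\psi$ to the explicit formulas for $\xi'$ and $\eta'$ from Section~\ref{sec:centre}. The images lie in the centre of $A_f$, hence are polynomial in $\xi$ and $\eta$; a degree and scaling comparison forces $\psi(\xi')$ and $\psi(\eta')$ to be affine in $\xi$ (respectively in $\xi$ and $\eta$) with leading coefficients $(\det \gamma)^2$ and $(\det \gamma)^3$, and the residual constants $\rho, \sigma, \tau$ are pinned down by a direct Magma computation, with $\sigma = \tau = 0$ in cases $n=2, 4$ forced by the $\eta \mapsto -\eta$ symmetry of the Weierstrass equation. For the derivation, $\psi \circ D \circ \psi^{-1} - (\det \gamma) D$ (with the two occurrences of $D$ referring to the derivations of $A_{f'}$ and $A_f$ respectively) is a $K$-derivation of $A_f$, and we exhibit $\kappa \in A_f$ realising it as $[\kappa, \cdot]$. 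The main obstacle is this last step: no uniform formula for $\kappa$ is apparent, so each subcase requires its own Magma-verified ansatz.
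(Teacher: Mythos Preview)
Your construction of $\psi$ has a genuine gap for $n \in \{3,4\}$. The defining identities of $A_f$ in these cases break the full $\GL_n$-symmetry by singling out the point $(0:0:1)$ (case $n=3$) or the line $\{x_3=x_4=0\}$ (case $n=4$). For $n=2$ your substitution $\alpha' = m_{11}\alpha + m_{21}\beta$, $\beta' = m_{12}\alpha + m_{22}\beta$ works because $\alpha,\beta$ carry the whole $\GL_2$-action and $r,s,t$ transform as the symmetric square. But for $n=3$ the identity $f(\alpha,\beta,\alpha x+\beta y)=0$ only permits substituting \emph{scalar} linear combinations for the formal central variables $\alpha,\beta$; this captures just the subgroup $\{M : m_{31}=m_{32}=0\}$ of $\GL_3$ fixing $(0:0:1)$. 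A $\gamma$ that moves this point cannot be handled by any such substitution: the paper instead writes down a map sending the generators to expressions in $x^{-1}$, and the invertibility of $x$ in $A_f$ is precisely what the hypothesis $f'(0,0,1)\neq 0$ supplies (via the relation $cx^3+c_1x^2+a_3x+a=0$ with $a\neq 0$). The same issue arises for $n=4$, where moving the line forces one to invert $t=qr-ps$. So a reduction to generators of $\G_n(K)$, including at least one generator that moves the distinguished point or line, is not optional here, and the hypotheses on $f,f'$ are used for more than mere definedness of the two algebras.

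There is also a circularity in your verification of (\ref{tr:xi}) and (\ref{tr:eta}). You argue that $\psi(\xi'),\psi(\eta')$ lie in the centre and ``hence are polynomial in $\xi$ and $\eta$'', then deduce the affine form by a degree comparison. But the statement that $\xi,\eta$ generate the centre is part of Theorem~\ref{thm1:alt}, whose proof in the paper uses Theorem~\ref{thm:cob} to reduce to a one-parameter family. The paper avoids this by checking (\ref{tr:xi}) directly for each generator of $\G_n(K)$, and observing that (\ref{tr:eta}) then follows formally from (\ref{tr:xi}) and (\ref{tr:D}) via the relation expressing $\eta$ in terms of $D\xi$.
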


\begin{Proof}
  We prove the theorem for $\gamma$ running over a set of generators
  for $\G_n(K)$. The set of generators will be large enough that the
  extra conditions in the cases $n=3,4$ (avoiding a certain point or
  line) do not require special consideration. 

  Writing $\eta$ in terms of the $D \xi$ we see that~\eqref{tr:eta} is
  a formal consequence of \eqref{tr:xi} and \eqref{tr:D}. It therefore
  suffices to check \eqref{tr:xi} and \eqref{tr:D}.  We may
  paraphrase~\eqref{tr:D} as saying that $\psi D \psi^{-1}$ and $(\det
  \gamma) D$ are equal up to inner derivations.  In particular we only
  need to check this statement for $x$ running over a set of
  generators for $A_f$.

We now split into the cases $n=2,3,4$.

\subsection{Binary quartics}
Let $\gamma = (\lambda,M)$. There is an isomorphism $\psi : A_{f'} \to A_f$
given by
\begin{align*}
r &\mapsto \lambda(m_{11}^2 r + m_{11} m_{12} s + m_{12}^2 t), \\
s &\mapsto \lambda(2 m_{11} m_{21} r 
      + (m_{11} m_{22} + m_{12} m_{21}) s + 2 m_{12} m_{22} t), \\
t &\mapsto \lambda(m_{21}^2 r + m_{21} m_{22} s + m_{22}^2 t). 
\end{align*}
We find that~\eqref{tr:xi} and~\eqref{tr:D} are satisfied with
\begin{align*}
\rho = -\lambda^2  \big( 2 & m_{11}^2 m_{21}^2 a 
    + m_{11} m_{21} (m_{11} m_{22} + m_{12} m_{21}) b  
    \\  &   + 2 m_{11} m_{12} m_{21} m_{22} c 
    + m_{12} m_{22} (m_{11} m_{22} + m_{12} m_{21}) d + 2 m_{12}^2 m_{22}^2 e \big) 
\end{align*}
and $\kappa = \lambda (m_{11} m_{21} r + m_{12} m_{21} s + m_{12} m_{22} t)$. 

\subsection{Ternary cubics}
The result is clear for $\gamma = (\lambda,I_3)$.  We take $\gamma =
(1,M)$.  If this change of co-ordinates fixes the point $(0:0:1)$,
equivalently $m_{31} = m_{32} = 0$, then there is an isomorphism $\psi
: A_{f'} \to A_f$ given by
\begin{align*}
  x & \mapsto m_{33}^{-1} ( m_{11} x + m_{12} y - m_{13} ), \\
  y & \mapsto m_{33}^{-1} ( m_{21} x + m_{22} y - m_{23} ).
\end{align*}
We checked~\eqref{tr:xi} by a generic calculation (leading to a
lengthy expression for $\rho$ which we do not record here), and find
that~\eqref{tr:D} is satisfied with
\[ \kappa = c m_{33} \big(m_{23} (m_{11} x + m_{12} y) - m_{13}
(m_{21} x + m_{22} y) \big). \] 

It remains to consider a transformation that moves the point
$(0:0:1)$.  Let $f'(x,y,z) = f(z,x,y)$. By hypothesis $a = f'(0,0,1)
\not= 0$. From the first relation defining 
$A_f$ it follows
that $x$ is invertible, 
i.e. $x^{-1} = -(c x^2 + c_1 x + a_3)/a$.  
There is an isomorphism $\psi : A_{f'} \to A_f$ given by $x
\mapsto -yx^{-1}$ and $y \mapsto x^{-1}$. 
We find that~\eqref{tr:xi} and~\eqref{tr:D} are satisfied 
with $\rho = 0$ and $\kappa = c y x + c_1 y$.

\subsection{Quadric intersections}
\label{sec:cob4}
The result for $\gamma = (\Lambda,I_4)$ follows easily from the fact
our expressions for $\eps$ and $\xi$ are linear and quadratic 
in the $d_{ij}$. 
We take $\gamma = (I_2,M)$. If 
\[ M = \begin{pmatrix} U^{-1} & 0 \\ 0 & I_2 \end{pmatrix}
\quad \text{ or } \quad 
\begin{pmatrix} I_2 & 0 \\ V & I_2 \end{pmatrix} \]
then an isomorphism $\psi : A_{f'} \to A_f$ is given by
\[ \left\{ \begin{aligned}
p &\mapsto u_{11} p + u_{21} q \\
 q &\mapsto u_{12} p + u_{22} q \\
r &\mapsto u_{11} r + u_{21} s \\ 
s &\mapsto u_{12} r + u_{22} s
\end{aligned} \right\}
\quad \text{ or } \quad 
\left\{ \begin{aligned}
p &\mapsto p - v_{11} \\ 
q &\mapsto q - v_{12} \\
r &\mapsto r - v_{21} \\
 s &\mapsto s - v_{22}
\end{aligned} \right\}. \]
We checked~\eqref{tr:xi} by a generic calculation, and find 
that~\eqref{tr:D} is satisfied with $\kappa = 0$ or 
$\kappa = v_{11} (d_{12} r + d_{15} s) + v_{12} (d_{15} r + d_{25} s)$.

It remains to consider a transformation that moves the line $\{x_3 = x_4=0\}$.
Let $f_i'(x_1,x_2,x_3,x_4) = f_i(x_3,x_4,x_1,x_2)$ for $i=1,2$.
By hypothesis $C_f$ does not meet the line $\{x_1 = x_2=0\}$ and so
$t=qr-ps$ is invertible, i.e.
\[  t^{-1} =  -(d_{89} (s_1 r + s_4)
    + d_{8,10} (r_5 q + r_6 + p_5 s + p_7 + d_{29}) 
         + d_{9,10} (q_1 p + q_3))/\Delta\]
where $\Delta = d_{8,10}^2 - d_{89} d_{9,10}$.
There is an isomorphism $\psi:A_{f'} \to A_f$ given by
$p \mapsto -st^{-1}$, $q \mapsto q t^{-1}$, $r \mapsto r t^{-1}$,
$s \mapsto -p t^{-1}$ and $t \mapsto t^{-1}$.
Under our assumption that $t$ is invertible, we have 
$\xi = \xi_1 + c_1$ where 
\begin{align*}
\xi_1 =& (d_{15}^2 - d_{12} d_{25}) t^2
      + (d_{15} d_{37} - d_{12} d_{67} - d_{15} d_{46} - d_{25} d_{34}) t \\
     & + (d_{37} d_{8,10} - d_{36} d_{9,10} + d_{46} d_{8,10} - d_{47} d_{89}) t^{-1}
      + (d_{8,10}^2 - d_{89} d_{9,10}) t^{-2},
\end{align*}
and $c_1 \in K$ is a constant (depending on $f$). 
Working with $\xi_1$ in place of $\xi$ makes it easy
to check~\eqref{tr:xi}. Finally~\eqref{tr:D} is satisfied with
\[ \kappa =
   \lambda \big(p (s_1 r + s_4) + q_{10}\big) 
   +  \mu \big(r (q_1 p + q_3) + s_8\big) 
  + r (d_{12} p + d_{15} q)
    - \tfrac{1}{2}(d_{23} r + d_{26} s) \]
for certain $\lambda, \mu \in K$. In fact we may take
$\lambda = (2 d_{48} d_{8,10}  - d_{38} d_{9,10}  
  - d_{89} d_{49} + d_{89} d_{3,10})/(2 \Delta)$
and $\mu = (2 d_{3,10} d_{8,10} - d_{4,10} d_{89}  
   - d_{9,10} d_{39} + d_{9,10}  d_{48})/(2 \Delta)$.
\end{Proof}

\section{Proof of Theorem~\ref{thm1}}
\label{sec:pfthm1}

In this section we prove the following refined version of 
Theorem~\ref{thm1}. The first two parts of the
theorem show that $A_f$ is an Azumaya algebra.
\begin{Theorem}
\label{thm1:alt}
Let $C_f$ be a smooth curve of genus one, defined by a genus one
model $f$ of degree $n \in \{2,3,4\}$. Then
\begin{enumerate}
\item The algebra $A = A_f$ is free of rank $n^2$ over its centre 
$Z$ (say).
\item The map $A \otimes_Z A^{\op} \to \End_Z(A); \, 
a \otimes b \mapsto (x \mapsto a xb)$ is an isomorphism.
\item The centre $Z$ is generated by the elements $\xi$ and $\eta$
specified in Section~\ref{sec:centre}, subject only to these satisfying 
a Weierstrass equation. 
\item The Weierstrass equation in (iii) defines the
Jacobian of $C_f$.
\end{enumerate}
\end{Theorem}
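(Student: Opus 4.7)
My plan is to follow the strategy already signalled by the author: exploit Theorem~\ref{thm:cob} to reduce to a convenient normal form, and then carry out the remaining verification by explicit computation in the finitely presented algebra, of the kind supported by Magma. The invariance under $\G_n(K)$ in Theorem~\ref{thm:cob} means that each of the four assertions is invariant under change of co-ordinates (for (i) and (ii) because $\psi$ is a $K$-algebra isomorphism and $\xi,\eta$ are only shifted by scalars up to a scalar multiple; for (iii) because the transformation of $\xi,\eta$ described by~\eqref{tr:xi} and~\eqref{tr:eta} is an automorphism of the polynomial subring they generate; for (iv) because the Jacobian is itself a covariant of $f$). So I would pass to a dense $\G_n$-orbit in the space of models, chosen small enough that the computations become tractable: a quartic of the form $y^2 = ax^4 + cx^2z^2 + ez^4$ (or with $b = 0$) for $n=2$; a cubic with $c=1$, $a_3 = b_3 = c_1 = c_2 = 0$ for $n=3$ (as in~\cite{Kuo}); and for $n=4$ a pair $(f_1,f_2)$ in which one of them is a diagonal or near-diagonal quadric, so that enough of the $d_{ij}$ vanish.

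For parts (i) and (ii), in the chosen normal form I would exhibit an explicit list of $n^2$ monomials in the generators and prove that they form a free basis of $A$ over the subring $R=K[\xi,\eta]$. Spanning is established by running the defining relations as a rewriting system: each relation lets one eliminate a leading monomial, and iterated reduction terminates on monomials from the proposed list (this can be made into a finite Gröbner-basis calculation). Freeness is shown by specialising at a generic point $(\xi_0,\eta_0)\in E(\overline{K})$ and checking that $A\otimes_R \overline{K}$ has dimension exactly $n^2$; equivalently, one checks that the matrix of the reduced multiplication table has full rank. For part (ii), once a basis is available it suffices to show that after extension of scalars to the function field of $E$, the specialised algebra $A_P$ is a matrix algebra $\Mat_n$; then the natural multiplication map $A\otimes_Z A^{\op}\to \End_Z(A)$ is of rank $n^4$ on both sides and is surjective (hence bijective) because central simple algebras are Azumaya. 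Splitting over the function field can be produced directly by exhibiting an explicit $n$-dimensional left ideal, which in the normal form is generated by a simple monomial in the $v_i$'s (the "line at infinity" motivating the construction).

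For part (iii), I would show that the proposed inclusion $R = K[\xi,\eta]/(W) \hookrightarrow Z$ is an equality. Since $A$ is already known to be free of rank $n^2$ over $R$, and free of rank $n^2$ over $Z$ (by (i)), while $Z$ sits between $R$ and $A$, the only possibility is $Z=R$ unless $R$ is contained in a proper extension inside $A$. This can be excluded by localising at a closed point of $\mathop{\mathrm{Spec}}R$ where $A$ specialises to a division algebra (equivalently, by identifying a point $P\in E(K)$ with $A_{f,P}$ non-split, easily arranged over a suitable number field or function field); alternatively, one writes out the commutator conditions $[a,r]=[a,s]=[a,t]=0$ (or the $n=3,4$ analogues) on a general element $a$ expanded in the $n^2$-element basis, and observes that the resulting linear system over $R$ has solution space of rank one. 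Finally, for part (iv), the Weierstrass equation $W(\xi,\eta)=0$ produced by the algebra must be matched with the explicit Jacobian formulas in~\cite{ARVT, minred}; since both are covariants of $f$ of the same bi-degree and agree on a single example (the Hesse pencil for $n=3$, a diagonal intersection for $n=4$), Theorem~\ref{thm:cob} plus a classical-invariant-theory argument forces them to coincide.

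The main obstacle will be the case $n=4$: the ten-generator ideal makes Gröbner-basis reduction expensive, and the derivation $D$ together with the expression for $\xi$ involve enough $d_{ij}$ that the centrality and the Weierstrass relation are not immediate by hand. I expect the heart of the work to be finding the right normal form for $(f_1,f_2)$ in which the $n^2=16$ basis monomials can be written down, the commutativity of $\xi,\eta$ with the generators can be verified relation-by-relation, and the Weierstrass coefficients produced can be matched (after the scalar shifts allowed by~\eqref{tr:xi},~\eqref{tr:eta}) with those of the Jacobian as computed from the invariants of the pencil of quadrics.
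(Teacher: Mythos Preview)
Your reduction via Theorem~\ref{thm:cob} to a convenient family is exactly the paper's strategy, and your approach to (iv) via covariance is likewise close to what the paper does. But there is a real gap in your argument for (ii), and the paper's execution of (i)--(iii) is quite different from what you propose.

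For (ii) you argue: once $A$ is free of rank $n^2$ over $Z$, both sides of $A\otimes_Z A^{\op}\to\End_Z(A)$ are free of rank $n^4$, and since the generic fibre $A\otimes_Z K(E)$ is central simple the map is an isomorphism. This does not follow. A $Z$-linear map between free modules of the same rank over a domain can have nonzero determinant (so be injective, and an isomorphism at the generic point) without the determinant being a unit; the cokernel is then supported on a proper closed subset of $\operatorname{Spec}Z$, and $A$ fails to be Azumaya there. Concretely, non-maximal orders in central simple $K(E)$-algebras give counterexamples. Exhibiting an $n$-dimensional left ideal over the function field does not rule this out; you would need the left ideal to be free of rank $n$ over $Z$ itself, so that $A$ acts through $\Mat_n(Z)$.

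The paper bypasses this entirely. After passing (via Theorem~\ref{thm:cob} and a field extension) to a one-parameter family covering the $j$-line, it writes down an explicit $K$-algebra isomorphism $\theta:A_f\to\Mat_n(K[x_0,y_0])$ together with its inverse, where $K[x_0,y_0]$ is the affine coordinate ring of the Weierstrass curve, and checks that $\theta(\xi)=x_0 I_n$, $\theta(\eta)=y_0 I_n$. This single calculation disposes of (i), (ii) and (iii) at once: the centre of $\Mat_n(R)$ is $R$, and $\Mat_n(R)$ is visibly Azumaya. The normal forms used are not generic diagonal models but rather the ``trivial'' models coming from a Weierstrass equation (so $C_f\cong E$). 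For $n=4$ there is an additional trick you did not anticipate: rather than attacking the sixteen-dimensional algebra head-on, one chooses $f$ to be the quadric intersection arising from a binary quartic $f'$ and exhibits an isomorphism $A_f\cong\Mat_2(A_{f'})$, thereby reducing $n=4$ to $n=2$.

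A smaller point: your first argument for (iii) (comparing ranks over $R$ and over $Z$) only yields $\operatorname{Frac}(R)=\operatorname{Frac}(Z)$; to get $R=Z$ you need $R$ integrally closed, i.e.\ the Weierstrass curve smooth, which is (iv). Your alternative via the commutator linear system avoids this circularity and is fine.
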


For the proof of the first three parts of Theorem~\ref{thm1:alt}
we are free to extend our field $K$. However working over
an algebraically closed field, it is well known that smooth curves 
of genus one $C_f$ and $C_{f'}$ are isomorphic as curves (i.e. have 
the same $j$-invariant) if and only if the genus one models $f$ 
and $f'$ are in the same orbit for the group action defined at
the start of Section~\ref{sec:cob}. We now split into the cases
$n=2,3,4$ and verify the theorem by direct computation for a family
of curves covering the $j$-line. The general case then follows
by Theorem~\ref{thm:cob}.

The generic calculations in Sections~\ref{ZBQ} and~\ref{ZTC}
already prove Theorem~\ref{thm1:alt}(iv) in the cases $n=2,3$.
The case $n=4$ will be treated in Section~\ref{sec:pf4}.

\subsection{Binary quartics}
Let $K[x_0,y_0] = K[x,y]/(F)$ where 
\[ F(x,y) = y^2 - (x^3 + a_2 x^2 + a_4 x + a_6). \]
We consider the binary quartic
$f(x,z) = a_6 x^4 + a_4 x^3 z + a_2 x^2 z^2 + x z^3$. 
Specialising the formulae in Section~\ref{ZBQ} we see that 
$\xi,\eta \in A_f$ satisfy $F(\xi,\eta)=0$.
\begin{Lemma}
There is an isomorphism of $K$-algebras
$\theta : A_f \to \Mat_2(K[x_0,y_0])$ given by
\[ r \mapsto \begin{pmatrix}
               -y_0 & x_0^2 + a_2 x_0 + a_4 \\
                -x_0 &                y_0 
\end{pmatrix}, \quad
 s \mapsto \begin{pmatrix}
0 & x_0 + a_2 \\
1 & 0 
\end{pmatrix}, \quad
 t \mapsto \begin{pmatrix}
0 & 1 \\
0 & 0 
\end{pmatrix}.
\]
Moreover we have $\theta(\xi) = x_0 I_2$ and $\theta(\eta) = y_0 I_2$.
\end{Lemma}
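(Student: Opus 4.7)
The plan is to verify the lemma by a direct matrix computation, closing off the injectivity step by a rank comparison that exploits the Clifford algebra description of $A_f$ from Section~\ref{AfBQ}.

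First I would check that the three prescribed matrices satisfy the five defining relations of $A_f$ with coefficients $a = a_6$, $b = a_4$, $c = a_2$, $d = 1$, $e = 0$. These are mechanical $2 \times 2$ matrix identities; verifying them produces a well-defined $K$-algebra homomorphism $\theta : A_f \to \Mat_2(K[x_0,y_0])$.

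Next I would compute $\theta(\xi)$ and $\theta(\eta)$. A direct squaring gives $\theta(s)^2 = (x_0 + a_2) I_2$, so $\theta(\xi) = \theta(s^2) - a_2 I_2 = x_0 I_2$. The products $\theta(st) = \bigl(\begin{smallmatrix} 0 & 0 \\ 0 & 1\end{smallmatrix}\bigr)$ and $\theta(ts) = \bigl(\begin{smallmatrix} 1 & 0 \\ 0 & 0\end{smallmatrix}\bigr)$ drop out at once, so $\theta(\eta) = \theta(rst) - \theta(tsr)$ collapses to a difference involving only the diagonal entries of $\theta(r)$, yielding $y_0 I_2$. Surjectivity then comes for free: $\theta(t)$, $\theta(st)$ and $\theta(ts)$ already give three matrix units, while the fourth is $\theta(s) - (x_0 + a_2)\theta(t)$, and combined with $\theta(\xi) = x_0 I_2$ and $\theta(\eta) = y_0 I_2$ one generates every entry of $\Mat_2(K[x_0,y_0])$.

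The one step with content is injectivity, which I would handle by a rank comparison over $K[\xi]$. By Section~\ref{AfBQ}, $A_f$ is the Clifford algebra over $K[\xi]$ of the ternary quadratic form $Q_\xi$, hence is free of rank $2^3 = 8$ as a $K[\xi]$-module. On the other side, $K[x_0,y_0] = K[\xi][\eta]/(\eta^2 - F(\xi))$ is free of rank $2$ over $K[\xi]$, so $\Mat_2(K[x_0,y_0])$ is free of rank $8$ as well. Since $\theta(\xi) = x_0 I_2$ is central, the homomorphism $\theta$ is $K[\xi]$-linear, and a surjection between free $K[\xi]$-modules of equal finite rank must be an isomorphism. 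The main obstacle is really only the book-keeping of the matrix identities; conceptually the lemma just packages the Clifford-algebra splitting that becomes visible after adjoining a square root $\eta$ of $F(\xi)$.
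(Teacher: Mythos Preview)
Your argument is correct, and it differs from the paper's in a pleasant way. The paper proves the lemma by writing down an explicit inverse $\phi : \Mat_2(K[x_0,y_0]) \to A_f$, sending $x_0 \mapsto \xi$, $y_0 \mapsto \eta$, $E_{12} \mapsto t$, $E_{21} \mapsto s - s^2 t$, and then checking by direct calculation that $\theta$ and $\phi$ are mutually inverse. You instead verify surjectivity by exhibiting the matrix units in the image (your $E_{21} = \theta(s - s^2 t)$ is exactly the paper's $\phi(E_{21})$, so the two arguments touch here), and then deduce injectivity from a rank count over $K[\xi]$: the Clifford-algebra description in Section~\ref{AfBQ} makes $A_f$ free of rank $8$ over $K[\xi]$, while $\Mat_2(K[x_0,y_0])$ is visibly free of rank $8$ as well, and a surjection of free modules of equal finite rank over the PID $K[\xi]$ is an isomorphism. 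Your route is a little more conceptual and saves the verification that $\phi$ respects the matrix-unit relations; the paper's route is more self-contained and yields the inverse explicitly, which is useful if one wants concrete formulae. Either way the computational core is the same set of $2\times 2$ identities.
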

\begin{Proof}
We write $E_{ij}$ for the $2$ by $2$ matrix with a $1$ in the $(i,j)$
position and zeros elsewhere. Then $\Mat_2(K[x_0,y_0])$ is generated
as a $K[x_0,y_0]$-algebra by $E_{12}$ and $E_{21}$ subject to the relations
$E_{12}^2 = E_{21}^2 = 0$ and $E_{12} E_{21} + E_{21} E_{12}= 1$.
We define a $K$-algebra homomorphism $\phi : \Mat_2(K[x_0,y_0]) \to A_f$
via \[x_0 \mapsto \xi, \,\, y_0 \mapsto \eta, \,\, E_{12} \mapsto t, \,\, 
E_{21} \mapsto s - s^2t.\] We checked by direct calculation 
that $\theta$ and $\phi$ are well defined
(i.e. they send all relations to zero), and that they are inverse to 
each other.
\end{Proof}

\subsection{Ternary cubics}
Let $K[x_0,y_0] = K[x,y]/(F)$ where 
\[ F(x,y) = y^2 + a_1 x y + a_3 y - (x^3 + a_2 x^2 + a_4 x + a_6).\]
We consider the ternary cubic $f(x,y,z) = x^3 F(z/x,y/x)$. 
Specialising the formulae in Section~\ref{ZTC} we see that 
$\xi,\eta \in A_f$ satisfy $F(\xi,\eta)=0$.
\begin{Lemma}
There is an isomorphism of $K$-algebras $\theta : A_f 
\to \Mat_3(K[x_0,y_0])$ given by
\[ x \mapsto \begin{pmatrix}
         -x_0 - a_2 &         -1 &     0 \\
    x_0^2 + a_2 x_0 + a_4 &     0 &    y_0 \\
      y_0 + a_1 x_0 + a_3 &     0 &    x_0 
\end{pmatrix},
\qquad y \mapsto 
\begin{pmatrix}
      0 &  0 &  0 \\
    -a_1 & 0 & -1 \\
     1 &   0 &  0 
\end{pmatrix}. \]
Moreover we have $\theta(\xi) = x_0 I_3$ and $\theta(\eta) = y_0 I_3$.
\end{Lemma}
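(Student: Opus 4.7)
My plan is to imitate the strategy of the preceding binary quartic lemma: construct an inverse $K$-algebra map $\phi : \Mat_3(K[x_0,y_0]) \to A_f$ and check that $\theta$ and $\phi$ are mutually inverse on generators.

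\textbf{Step 1: $\theta$ is well-defined.} Specialise the four relations of Section~\ref{AfTC} to the present $f(x,y,z)=x^3 F(z/x,y/x)$; reading coefficients off $f$ yields a specific ternary cubic with $c\neq 0$, so the construction of $A_f$ applies. I would substitute the proposed matrices $X=\theta(x)$ and $Y=\theta(y)$ and verify each of the resulting four matrix identities in $\Mat_3(K[x_0,y_0])$. The cubic relation in $x$ and the relation $y^3=0$ are immediate from the shapes of $X$ and $Y$; the two mixed-degree symmetric relations reduce to zero after a single application of the Weierstrass identity $y_0^2+a_1 x_0 y_0+a_3 y_0 = x_0^3+a_2 x_0^2+a_4 x_0+a_6$.

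\textbf{Step 2: define $\phi$.} Present $\Mat_3(R)$, with $R=K[x_0,y_0]$, as a $K$-algebra via the scalars $x_0I_3$, $y_0I_3$ and the nine matrix units $E_{ij}$, modulo the usual matrix-unit relations together with $F(x_0,y_0)=0$. Set $\phi(x_0)=\xi$ and $\phi(y_0)=\eta$, and send each $E_{ij}$ to a short element $e_{ij}\in A_f$ in $x,y,\xi,\eta$ for which $\theta(e_{ij})=E_{ij}$. Such preimages can be read off by inspection of the matrices: for example $\theta(y)^2=-E_{21}$ forces $e_{21}=-y^2$, and the other eight $e_{ij}$ are obtained by similar inspection of small products and $R$-linear combinations of $X$ and $Y$. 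Verifying that $\phi$ respects every matrix-unit relation and sends $F(x_0,y_0)$ to zero inside $A_f$ is then a finite calculation, the last point being exactly the identity $F(\xi,\eta)=0$ established in Section~\ref{ZTC}.

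\textbf{Step 3: inverses and central values.} Check $\theta\phi=\id$ and $\phi\theta=\id$ on the chosen generating sets. The only nontrivial parts are $\theta(\xi)=x_0 I_3$ and $\theta(\eta)=y_0 I_3$, i.e.\ the final two assertions of the lemma. For $\theta(\xi)$, expand the explicit formula of Section~\ref{ZTC} and simplify the resulting matrix using the Weierstrass relation; for $\theta(\eta)$, either expand directly or, more cleanly, observe that $D$ transports under $\theta$ to a derivation of $\Mat_3(R)$ whose effect on $x_0 I_3$ can be computed from $\theta(Dx),\theta(Dy)$, and then invoke $\eta=\tfrac12(D\xi-a_1\xi-a_3)$.

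The main obstacle is Step 2: producing compact closed-form preimages of all nine matrix units and then verifying the multiplicative relations $E_{ij}E_{k\ell}=\delta_{jk}E_{i\ell}$ inside $A_f$. The bookkeeping is substantially heavier than in the $n=2$ case, since there are nine matrix-unit relations rather than two, and $A_f$ is genuinely non-commutative, presented by four nontrivial relations of degrees up to three. As the paper emphasises, this is the kind of finitely-presented-algebra calculation best delegated to Magma, but conceptually nothing new is required beyond the binary quartic proof.
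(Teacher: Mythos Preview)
Your approach is essentially the paper's: verify $\theta$ respects the defining relations, build an inverse $\phi$ on a presentation of $\Mat_3(K[x_0,y_0])$ sending $x_0\mapsto\xi$, $y_0\mapsto\eta$, and check the two composites on generators. The one substantive difference is in Step~2. You propose presenting $\Mat_3(R)$ on all nine matrix units $E_{ij}$, which forces you to find nine preimages and verify the full family of relations $E_{ij}E_{k\ell}=\delta_{jk}E_{i\ell}$ together with $\sum_i E_{ii}=1$ inside $A_f$. The paper instead uses the much leaner cyclic presentation on just three generators $E_{12},E_{23},E_{31}$, subject to
\[
E_{12}^2=E_{23}^2=E_{31}^2=E_{12}E_{31}=E_{23}E_{12}=E_{31}E_{23}=0,\qquad
E_{12}E_{23}E_{31}+E_{23}E_{31}E_{12}+E_{31}E_{12}E_{23}=1,
\]
and gives the three preimages explicitly. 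This cuts the bookkeeping dramatically and is worth adopting. A small correction to Step~3: the identities $\theta(\xi)=x_0I_3$ and $\theta(\eta)=y_0I_3$ handle $\theta\phi$ on the scalar generators, but $\phi\theta=\id$ on $x,y$ is an independent (and equally nontrivial) check, since it amounts to recovering $x$ and $y$ from the chosen $e_{ij}$, $\xi$, $\eta$ inside $A_f$.
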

\begin{Proof}
We write $E_{ij}$ for the $3$ by $3$ matrix with a $1$ in the $(i,j)$
position and zeros elsewhere. Then $\Mat_3(K[x_0,y_0])$ is generated
as a $K[x_0,y_0]$-algebra by $E_{12}$, $E_{23}$ and $E_{31}$ subject 
to the relations
\[ E_{12}^2 = E_{23}^2 =  E_{31}^2 = E_{12} E_{31} = E_{23} E_{12} = E_{31} E_{23} = 0, \]
and 
\[
 E_{12} E_{23} E_{31} + E_{23} E_{31} E_{12} + E_{31} E_{12} E_{23} = 1. \]
We define a $K$-algebra homomorphism $\phi : \Mat_3(K[x_0,y_0]) \to A_f$
via $x_0 \mapsto \xi$, $y_0 \mapsto \eta$ and 
\[  E_{12}  \mapsto -x y^2 (x + \xi + a_2), \quad
    E_{23}  \mapsto  -y^2 (x y - a_1), \quad
    E_{31}  \mapsto  (y x - a_1) y^2. \]
We checked by direct calculation 
that $\theta$ and $\phi$ are well defined, and that they are inverse to 
each other.
\end{Proof}

\subsection{Quadric intersections}
\label{sec:pf4}

Let $f' \in K[x,z]$ be a binary quartic, say
\[ f'(x,z) = a x^4 + b x^3 z + c x^2 z^2 + d x z^3 + e z^4. \]
The morphism $C_{f'} \to \PP^3$ given by
$(x_1 : x_2 : x_3 : x_4) = (xz : y : x^2 : z^2)$ has image
$C_f$ where $f = (f_1,f_2)$ and
\begin{equation}
\label{QIfromBQ}
\begin{aligned}
f_1(x_1,x_2,x_3,x_4) & = x_1^2- x_3 x_4, \\
f_2(x_1,x_2,x_3,x_4) & = 
    x_2^2 - (a x_3^2 + b x_1 x_3 + c x_3 x_4 + d x_1 x_4 + e x_4^2).
\end{aligned}
\end{equation}
We write $r',s',t'$ and $\xi',\eta'$ for 
the generators and central elements of $A_{f'}$. 
\begin{Lemma} 
\label{lem:42}
There is an isomorphism of $K$-algebras 
$\theta : A_f \to \Mat_2(A_{f'})$ given by
\[ p \mapsto \begin{pmatrix} 0 & 1 \\ 0 & 0 \end{pmatrix}, \quad 
q \mapsto \begin{pmatrix} r' & s' \\ 0 & r' \end{pmatrix}, \quad
r \mapsto \begin{pmatrix} 0 & 0 \\ 1 & 0 \end{pmatrix}, \quad
s \mapsto \begin{pmatrix} t' & 0 \\ s' & t' \end{pmatrix}.
\]
Moreover we have $\theta(\xi) = (\xi' + c) I_2$ and $\theta(\eta) = -\eta' I_2$.
\end{Lemma}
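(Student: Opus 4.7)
The strategy mirrors the proofs of the two previous lemmas: define a candidate inverse $\phi \colon \Mat_2(A_{f'}) \to A_f$ explicitly on generators and verify that both maps are well defined and mutually inverse by direct calculation.

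First I would check that $\theta$ is a well-defined $K$-algebra homomorphism. For the choice of $f = (f_1,f_2)$ in~\eqref{QIfromBQ}, the nine defining relations of $A_f$ from Section~\ref{AfQI} reduce (under the specific coefficients of $f_1,f_2$) to $p^2 = 0$, $pr + rp = 1$, $r^2 = 0$, $q^2 = a + bp$, $s^2 = e + dr$, $qs + sq = c + br + dp$, together with the three commutator relations. Each is verified by a direct $2 \times 2$ computation: for example $\theta(q)^2 = \begin{pmatrix} (r')^2 & r's' + s'r' \\ 0 & (r')^2 \end{pmatrix}$, which equals $a I_2 + b E_{12} = \theta(a + bp)$ by the Clifford identities $(r')^2 = a$ and $r's' + s'r' = b$ defining $A_{f'}$ (Section~\ref{AfBQ}). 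The relation $qs + sq = c + br + dp$ uses in addition $r't' + t'r' + (s')^2 = c$ and $s't' + t's' = d$, and the mixed commutator $ps + rq = qr + sp$ follows because both sides equal $s' I_2 + t' E_{12} + r' E_{21}$.

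For the inverse $\phi$, I would exploit the matrix-unit relations $p^2 = r^2 = 0$, $pr + rp = 1$ just verified to put
\[ \phi(E_{12}) = p, \quad \phi(E_{21}) = r, \quad \phi(E_{11}) = pr, \quad \phi(E_{22}) = rp, \]
and then extract the ``diagonal'' parts of $q$ and $s$ via
\[ \phi(r' I_2) = prqpr + rpqrp, \quad \phi(t' I_2) = prspr + rpsrp. \]
A short computation under $\theta$ confirms these map to $r' I_2$ and $t' I_2$. Since $\theta(q) - r' I_2 = s' E_{12}$, the element $\phi(s' I_2)$ is recovered from the identity $s' I_2 = (s' E_{12}) E_{21} + E_{21}(s' E_{12})$ as $(q - \phi(r' I_2))r + r(q - \phi(r' I_2))$.

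The main obstacle, as in the previous two lemmas, is showing $\phi$ is well defined: the images of $r',s',t'$ in $A_f$ must commute with $p$ and $r$ and satisfy the five Clifford relations of $A_{f'}$. These are bare word-manipulations in $A_f$, but are sufficiently lengthy to justify the use of Magma, as elsewhere in the paper. Once $\phi$ is in hand, the compositions $\theta \phi$ and $\phi \theta$ are the identity on the chosen generating sets, so $\theta$ is an isomorphism. Finally the formulas $\theta(\xi) = (\xi' + c) I_2$ and $\theta(\eta) = -\eta' I_2$ are verified by substituting $\theta(p), \theta(q), \theta(r), \theta(s)$ into the expression for $\xi$ from Section~\ref{ZQI} and into $\eta = \tfrac{1}{2} D\xi$, and simplifying using the Clifford relations of $A_{f'}$ to recover $(s')^2 I_2 = (\xi' + c) I_2$ and $-(r's't' - t's'r') I_2 = -\eta' I_2$ via the formulas for $\xi'$ and $\eta'$ in Section~\ref{ZBQ}.
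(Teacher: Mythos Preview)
Your approach is correct and essentially identical to the paper's: both verify $\theta$ is well defined by direct calculation, write down an explicit inverse $\phi$ on generators, and check that $\theta$ and $\phi$ are mutually inverse. The only cosmetic difference is in the specific formulas for $\phi$: the paper uses the more compact $r' I_2 \mapsto pqr + rqp$, $s' I_2 \mapsto prqr + rqrp$, $t' I_2 \mapsto psr + rsp$, whereas you extract diagonal parts via the idempotents $pr$ and $rp$; these agree in $A_f$ and either choice works.
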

\begin{proof}
Again the proof is by direct calculation, the 
$K$-algebra homomorphism 
inverse to $\theta$ being given by
$E_{12} \mapsto p$, $E_{21} \mapsto r$ and
\[ 
r' I_2  \mapsto pqr + rqp, 
  \quad 
s' I_2 \mapsto prqr+rqrp, 
  \quad 
t' I_2 \mapsto psr + rsp. \qedhere \]
\end{proof}

To complete the proof of Theorem~\ref{thm1:alt}, and hence of
Theorem~\ref{thm1}, it remains to show that in the case $n=4$
the Weierstrass equation satisfied by $\xi$ and $\eta$ is in fact
an equation for the Jacobian of $C_f$.

Let $A$ and $B$ be the $4$ by $4$ matrices of partial derivatives of
$f_1$ and $f_2$. We define $a,b,c,d,e$ by writing $\tfrac{1}{4}
\det(Ax + B) = a x^4 + b x^3 + c x^2 + d x + e$.  As shown in
\cite{AKM3P}, the Jacobian of $C_f$ has Weierstrass equation $y^2 =
F(x)$ where $F$ is the monic cubic polynomial defined
in~\eqref{jac2cubic}.

We claim that $\eta^2 = F(\xi + c_0)$ for some constant $c_0 \in K$
(depending on $f$).  In verifying this claim we are free to extend our
field.  We are also free to make changes of coordinates. Indeed if $f'
= \gamma f$ for some $\gamma = (\Lambda,M) \in \G_4(K)$ then by
Theorem~\ref{thm:cob} there is an isomorphism $\psi : A_{f'} \to A_f$
with $\xi \mapsto (\det \gamma)^2 \xi + \rho$ and $\eta \mapsto (\det
\gamma)^3 \eta$. On the other hand the monic cubic polynomials $F$ and
$F'$ (associated to $f$ and $f'$) are related by $F'(x-\frac{1}{3}c')
= (\det \gamma)^6 F((\det \gamma)^{-2} x - \frac{1}{3}c)$.
Finally we checked that for $f$ as specified in~\eqref{QIfromBQ},
the claim is satisfied with $c_0 = 0$.
 
\section{Proof of Theorem~\ref{thm2}}
\label{sec:pf2}

In this section we recall the definition of the isomorphism~\eqref{myisom},
and then prove that the construction of $A_f$ from $C_f$ is an explicit
realisation of this map.

\subsection{Galois cohomology}
\label{sec:galcoh}
Let $E/K$ be an elliptic curve. Writing $\Kbar$ for a separable closure
of $K$, the short exact sequences of Galois modules
\begin{equation}
\label{ses1}
 0 \to \Kbar^\times 
  \to \Kbar(E)^\times \to  \Kbar(E)^\times/\Kbar^\times \to 0, 
\end{equation}
and \[ 0 
  \to \Kbar(E)^\times/\Kbar^\times \to \Div E \to \Pic E \to 0, \]
give rise to long exact sequences
\begin{equation}
\label{les1}
 H^2(K,  \Kbar^\times) \to  H^2(K,\Kbar(E)^\times) \to  H^2(K,\Kbar(E)^\times
/\Kbar^\times ), 
\end{equation}
and
\begin{equation}
\label{les2}
 H^1(K, \Div E) \to  H^1(K,\Pic E) \to  H^2(K,\Kbar(E)^\times
/\Kbar^\times ) \to  H^2(K, \Div E). 
\end{equation}
Since $H^1(K,\Z) = 0$ it follows by Shapiro's lemma that $H^1(K, \Div E)=0$.
We may identify $H^1(K,\Pic E) = H^1(K,\Pic^0 E) = H^1(K,E)$
and $H^2(K,  \Kbar^\times) = \Br(K)$.
As shown in \cite[Appendix]{L} we may identify
\[\Br(E) = \ker \left( H^2(K,\Kbar(E)^\times ) \to H^2(K,\Div E ) \right). \]

We fix a local parameter $t$ at $0 \in E(K)$. The left hand map
in~\eqref{ses1} is split by the map sending a Laurent power series in
$t$ to its leading coefficient.  It follows that the right hand map
in~\eqref{les1} is surjective, and hence $H^1(K,E) \isom
\Br(E)/\Br(K)$. Since the natural map $\Br(K) \to \Br(E)$ is split by
evaluation at $0 \in E(K)$ this also gives the
isomorphism~\eqref{myisom}.

\subsection{Cyclic algebras}
Let $L/K$ be a Galois extension with $\Gal(L/K)$ cyclic of order $n$,
generated by $\sigma$. For $b \in K^\times$ the cyclic algebra
$(L/K,b)$ is the $K$-algebra with basis $1,v, \ldots, v^{n-1}$ as an
$L$-vector space, and multiplication determined by $v^n = b$ and $v
\lambda = \sigma(\lambda) v$ for all $\lambda \in L$. This is a
central simple algebra over $K$ of dimension $n^2$. It is split by $L$
and so determines a class in $\Br(L/K)$.

We compute cohomology of $C_n = \langle \sigma | \sigma^n= 1\rangle$
relative to the resolution 
\[ \ldots \ra \Z[C_{n}] \stackrel{\Delta}{\ra}
   \Z[C_{n}] \stackrel{N}{\ra}
   \Z[C_{n}] \stackrel{\Delta}{\ra}
   \Z[C_{n}] \, \ra \, 0, \] 
where $\Delta = \sigma-1$ and $N = 1+ \sigma + \ldots + \sigma^{n-1}$.
Thus for $A$ a $\Gal(L/K)$-module, 
\[  H^i(\Gal(L/K),A) = \left\{
\begin{array}{ll} 
\ker(N|A)/ \im (\Delta|A) & \text{ if $i \ge 1$ odd,} \\
\ker(\Delta|A)/ \im (N|A) & \text{ if $i \ge 2$ even.}
\end{array} \right. \]
In particular $K^\times/N_{L/K}(L^\times) \isom
H^2(\Gal(L/K),L^\times) = \Br(L/K)$. This isomorphism is 
realised by sending $b \in K^\times$ to the class of $(L/K,b)$.

Let $E/K$ be an elliptic curve, and fix a local parameter $t$ at $0
\in E(K)$.  If $g \in K(E)^\times$ then we write $(L/K,g)$ for the
cyclic algebra $(L(E)/K(E),g)$.  We may describe the
isomorphism~\eqref{myisom} in terms of cyclic algebras as follows.

\begin{Lemma} 
\label{lem:cyc}
Let $C/K$ be a smooth curve of genus one curve with Jacobian $E$, and
suppose $Q \in C(L)$.  Let $P$ be the image of $[\sigma Q - Q]$ under
$\Pic^0(C) \isom E$.  Then the isomorphism~\eqref{myisom} sends the
class of $C$ to the class of $(L/K,g)$ where $g \in K(E)^\times$ has
divisor $(P) + (\sigma P) + \ldots + (\sigma^{n-1}P) - n(0)$, and is
scaled to have leading coefficient $1$ when expanded as a Laurent
power series in $t$.
\end{Lemma}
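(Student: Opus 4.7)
The plan is to trace the class of $C$ through the chain of maps defining~\eqref{myisom}, and to identify the resulting $2$-cocycle as the one defining $(L/K,g)$.

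The class of $C$ in $H^1(K,E)$ is represented by the cocycle $\tau \mapsto [\tau Q - Q] \in \Pic^0(C) \isom E$. A telescoping computation gives $[\sigma^i Q - Q] = \sum_{k=0}^{i-1} \sigma^k P$, so restricted to $\Gal(L/K) = \langle \sigma \rangle$ the cocycle is $\sigma^i \mapsto \sum_{k=0}^{i-1} \sigma^k P$ for $0 \le i \le n-1$. Using $H^1(K, \Div E) = 0$, I would lift this to a $1$-cochain of divisors on $E$ by setting $D_{\sigma^i} = \sum_{k=0}^{i-1}(\sigma^k P) - i(0)$. The connecting homomorphism from~\eqref{les2} then sends $[C]$ to the class of the coboundary
\begin{equation*}
(\sigma^i, \sigma^j) \longmapsto \sigma^i D_{\sigma^j} - D_{\sigma^{i+j}} + D_{\sigma^i}.
\end{equation*}
A direct calculation shows that this coboundary vanishes when $i+j < n$, and for $i+j \ge n$ equals $\sum_{k=0}^{n-1}(\sigma^k P) - n(0) = \divv(g)$, using $\sigma^n P = P$ to reindex the telescoped sum so that it covers a complete $\Gal(L/K)$-orbit.

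Next, I would lift from $H^2(K, \Kbar(E)^\times/\Kbar^\times)$ to $H^2(K, \Kbar(E)^\times)$ via the splitting of~\eqref{ses1} by leading coefficient at $t$, as described in Section~\ref{sec:galcoh}. Since the divisor of $g$ is $\Gal(L/K)$-invariant (the set $\{\sigma^k P\}$ is a single orbit), the normalization by leading coefficient $1$ produces a Galois-invariant function in $K(E)^\times$. The lifted $2$-cocycle then takes the value $g$ when $i+j \ge n$ and $1$ otherwise, which is precisely the standard $2$-cocycle for the cyclic algebra $(L/K, g)$ relative to the basis $1, v, \ldots, v^{n-1}$ with $v^n = g$. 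By construction the resulting class lies in $\Br(E) = \ker(H^2(K,\Kbar(E)^\times) \to H^2(K, \Div E))$, and is the image of $[C]$ under~\eqref{myisom}.

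The main obstacle is bookkeeping across the snake-lemma and connecting maps — ensuring that the coboundary formula above produces the standard $(L/K,g)$ cocycle rather than its inverse, and confirming that the leading-coefficient normalization is precisely what makes the lifted class lie in the kernel of $\ev_0$, so that the resulting element of $\Br(E)$ is well-defined and matches $[C]$ under the stated isomorphism.
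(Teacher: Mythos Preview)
Your proof is correct and follows the same overall logic as the paper: represent $[C]$ in $H^1(K,\Pic E)$, push through the connecting map in~\eqref{les2}, and use the leading-coefficient splitting to land in $\ker(\ev_0)$. The difference is in the bookkeeping device. The paper computes cohomology of the cyclic group via the periodic resolution introduced just before the lemma, so a $1$-class is recorded as the single element $(P)-(0)$ and the connecting map is simply the norm $N_{L/K}$, yielding $\divv g = N_{L/K}((P)-(0))$ in one line. You instead work with the bar resolution: you write out the full $1$-cocycle $\sigma^i \mapsto \sum_{k<i}\sigma^k P$, lift to divisors, and compute the $2$-coboundary explicitly, obtaining the standard factor set $\xi_{\sigma^i,\sigma^j}$ (equal to $g$ or $1$ according as $i+j\ge n$ or not) for the cyclic algebra. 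Your route is longer but has the advantage of exhibiting the crossed-product cocycle directly, so no separate comparison with the description of $\Br(L/K)$ is needed; the paper's route is shorter but relies on the reader having internalised the periodic-resolution description of $H^*(C_n,-)$ and the identification $b\mapsto (L/K,b)$ stated just above. Your closing caveat about signs and the $\ev_0$-normalisation is appropriate caution, but the computation you sketch does go through as written.
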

\begin{Proof} 
  We identify $E \isom \Pic^0(E)$ via $T \mapsto (T)-(0)$. Then the
  class of $C$ in $H^1(K,\Pic E)$ is represented by $(P)-(0)$, and its
  image under the connecting map in~\eqref{les2} is represented by $g
  \in K(E)^\times$ where $\divv g = N_{L/K} ((P)-(0))$.  Finally to
  lift to an element of $\ker ( \ev_0 : \Br(E) \to \Br(K) )$ we scale
  $g$ as indicated. 
\end{Proof}

\subsection{Binary quartics}
\label{pf2:BQ}
We prove Theorem~\ref{thm2} in the case $n=2$.
By a change of coordinates we may assume\footnote{It is incorrectly claimed
in \cite[Section 5]{HH} that we may further assume $d=0$.} that $a\not=0$ 
and $b=0$, i.e.
\[ f(x,z) = a x^4 + c x^2 z^2 + d x z^3 + e z^4. \]

Let $E$ be the Jacobian of $C_f$, with Weierstrass equation as
specified in Section~\ref{ZBQ}.  We know by Theorem~\ref{thm1} that
the centre $Z$ of $A_f$ is a Dedekind domain with field of fractions
$K(E)$.  Therefore the natural map $\Br(Z) \to \Br(K(E))$ is
injective, and so it suffices for us to consider the class of the
quaternion algebra $A_f \otimes_Z K(E)$ in $\Br(K(E))$.  This algebra
is generated by $r$ and $s$ subject to the rules $r^2=a$, $rs + sr =
0$ and $s^2 = \xi + c$. It is therefore the cyclic algebra $(L/K,g)$
where $L = K(\sqrt{a})$ and $g \in K(E)^\times$ is the rational
function 
$g(\xi,\eta) = \xi + c$.

By inspection of the Weierstrass equation for $E$ in
Section~\ref{ZBQ}, we see that $\divv g = (P) + (\sigma P) - 2(0)$
where $P = (-c,d\sqrt{a}) \in E(L)$.  Let $C_f$ have equation $y^2 =
f(x_1,x_2)$, and let $Q \in C(L)$ be the point $(x_1:x_2:y) =
(1:0:\sqrt{a})$.  Let $\pi : C_f \to E$ be the covering map, i.e. the
map $T \mapsto [2(T) - H]$ where $H$ is the fibre of the double cover
$C_f \to \PP^1$.  Using the formulae in~\cite{AKM3P} we find that
$\pi(Q) = -P$.  Therefore $[\sigma Q - Q] = [H - 2(Q)] = P$.  It
follows by Lemma~\ref{lem:cyc} that the isomorphism~\eqref{myisom}
sends the class of $C$ to the class of the cyclic algebra $(L/K,g)$.
This completes the proof of Theorem~\ref{thm2} in the case $n=2$.

\subsection{Ternary cubics}
We prove Theorem~\ref{thm2} in the case $n=3$.  Since $2$ and $3$ are
coprime, we are free to replace our field $K$ by a quadratic
extension. We may therefore suppose that $\zeta_3 \in K$ and that
$f(x,0,z) = a x^3 - z^3$ with $a \not=0$.  Further substitutions of
the form $x \leftarrow x + \lambda y$ and $z \leftarrow z + \lambda'
y$ reduce us to the case
\[ f(x,y,z) = a x^3 + b y^3 - z^3 + b_1 x y^2 + b_3 y^2 z + m x y z
. \]

The algebra $A_f \otimes_Z K(E)$ is generated by $x$ and $v = y x -
\zeta_3 x y - \tfrac{1}{3}(1-\zeta_3)m$ subject to the rules $x^3 =
a$, $x v = \zeta_3 v x$ and $v^3 = g(\xi,\eta)$
where 
 \[ g(\xi,\eta) =  \eta - \zeta_3^2 m \xi - 3 (1-\zeta_3) a b 
            + \tfrac{1}{9}(\zeta_3-\zeta_3^2) m^3. \]
It is therefore the cyclic algebra $(L/K,g)$ where $L = K(\sqrt[3]{a})$.

Let $E$ be given by the Weierstrass equation specified in
Section~\ref{ZTC}.  We find that $\divv g = (R) + (\sigma R) +
(\sigma^2 R) - 3(0)$ for a certain point $R \in E(L)$ with
$x$-coordinate $-(1/3) m^2 + b_1 \sqrt[3]{a} - b_3 (\sqrt[3]{a})^2$.
Let $Q = (1:0:\sqrt[3]{a}) \in C_f(L)$.  Let $\pi : C_f \to E$ be the
covering map, i.e. the map $T \mapsto [3(T) - H]$ where $H$ is the
hyperplane section.  Using the formulae in~\cite{AKM3P} we find that
$\pi(Q) = \sigma R - \sigma^2 R$.  We compute
\[ 3 [ \sigma Q - Q] = \pi(\sigma Q) - \pi( Q) 
= \sigma(\sigma R - \sigma^2 R ) - (\sigma R - \sigma^2 R) = 3 \sigma^2 R. \]
Since generically $E$ has no $3$-torsion, it follows that
$[\sigma Q - Q] = \sigma^2 R$. Taking $P = \sigma^2 R$
in Lemma~\ref{lem:cyc} completes the proof.

\subsection{Dihedral algebras}
Let $L/K$ be a Galois extension with $\Gal(L/K) \isom D_{2n}$ where
$D_{2n} = \langle \sigma,\tau | \sigma^n = \tau^2 = (\sigma \tau)^2 =
1 \rangle$ is the dihedral group of order $2n$.  Let $K_1$, $F$ and
$\widetilde{F}$ be the fixed fields of $\sigma$, $\tau$ and $\sigma
\tau$. For $(b,\eps,\widetilde{\eps}) \in K_1^\times \times F^\times
\times \widetilde{F}^\times$ satisfying $N_{K_1/K}(b) N_{F/K}(\eps) =
N_{\widetilde{F}/K}(\widetilde{\eps})$ we define the dihedral algebra
$(L/K,b,\eps,\widetilde{\eps})$ to be the $K$-algebra with basis
$1,v,\ldots, v^{n-1}, w,vw, \ldots, v^{n-1}w$ as an $L$-vector space,
and multiplication determined by $v^n = b$, $w^2 = \eps$, $(vw)^2 =
\widetilde{\eps}$, $v \lambda = \sigma(\lambda) v$ 
and $w \lambda = \tau(\lambda) w$ for all $\lambda \in L$.  As
we explain below, this is a special case of a crossed product
algebra. In particular it is a central simple algebra over $K$ of
dimension $(2n)^2$. It is split by $L$ and so determines a class in
$\Br(L/K)$.

Let $N = 1 + \sigma + \ldots +\sigma^{n-1} \in \Z[D_{2n}]$.  We
compute cohomology of $D_{2n}$ relative to the resolution
\begin{equation}
\label{dihres}
 \ldots \ra \Z[D_{2n}]^4 \stackrel{\Delta_3}{\ra}
   \Z[D_{2n}]^3 \stackrel{\Delta_2}{\ra}
   \Z[D_{2n}]^2 \stackrel{\Delta_1}{\ra}
   \Z[D_{2n}] 
     \, \ra \, 0 
\end{equation}
where 
\[ \Delta_3 = \begin{pmatrix} \sigma - 1 & 0 & 0 \\
0 & \tau - 1 & 0 \\ 0 & 0& \sigma \tau - 1 \\ 
\tau + 1 & N & -N
\end{pmatrix}, \,\,
\Delta_2 = \begin{pmatrix} N & 0 \\ 
0 & \tau + 1 \\
\sigma \tau + 1 & \sigma + \tau 
\end{pmatrix}, \,\, \Delta_1 = \begin{pmatrix} \sigma - 1 \\ \tau -
  1 \end{pmatrix}, \] and our convention is that $\Delta_m$ acts by
right multiplication on row vectors.  This resolution is a special
case of that defined in~\cite{Wall}, except that we have applied some
row and column operations to simplify $\Delta_2$ and $\Delta_3$.
Using this resolution to compute $\Br(L/K)=H^2(\Gal(L/K),L^\times)$ we
find
\begin{equation}
\label{Br-isom}
\frac{ \{ (b,\eps,\widetilde{\eps}) \in K_1^\times
\times F^\times \times  \widetilde{F}^\times \mid N_{K_1/K}(b) N_{F/K}(\eps)
 = N_{\widetilde{F}/K}(\widetilde{\eps}) \} }{ \{ (N_{L/K_1}(\lambda_1),
N_{L/F}(\lambda_2), N_{L/\widetilde{F}}(\lambda_1 \lambda_2)) \mid 
\lambda_1, \lambda_2 \in L^\times \} }  \isom \Br(L/K). 
\end{equation}
This isomorphism is realised by sending $(b,\eps,\widetilde{\eps})$ to
the class of the dihedral algebra $(L/K,b,\eps,\widetilde{\eps})$.
Our claim that dihedral algebras
are crossed product algebras is justified by comparing this 
description of $\Br(L/K)$ with that obtained from the standard resolution.

In more detail, there is a commutative diagram of free $\Z[D_{2n}]$-modules
\[ \xymatrix{ \ar[r] & \bigoplus_{(g,h) \in D_{2n}^2} \Z[D_{2n}]
\ar[d]^{\phi_2} \ar[r]^-{d_2} & \bigoplus_{g \in D_{2n}} \Z[D_{2n}] 
\ar[r]^-{d_1} \ar[d]^{\phi_1} & \Z[D_{2n}] \ar@{=}[d] \ar[r] & 0 \\
\ar[r] & \Z[D_{2n}]^3 \ar[r]^{\Delta_2} & \Z[D_{2n}]^2 \ar[r]^{\Delta_1}
& \Z[D_{2n}] \ar[r] & 0 } \]
where the first row is the standard resolution, i.e. $d_1(e_g) = g-1$
and $d_2(e_{g,h}) = g(e_h) - e_{gh} + e_g$, and the second row is 
the resolution~\eqref{dihres}. We choose $\phi_1$ such that
\begin{align*}
\phi_1(e_1) &= (0,0), & \phi_1(e_{\sigma^i}) &= (1 + \sigma + \ldots
 + \sigma^{i-1},0) \quad \text{ for } 0<i<n, \\
\phi_1(e_\tau) &= (0,1), & \phi_1(e_{\sigma^i \tau}) &= (1 + \sigma + \ldots
 + \sigma^{i-1}, \sigma^i) \quad \text{ for } 0<i<n. 
\end{align*}
We further choose $\phi_2$ such that for $0 \le i,j <n$ we have
\[ \phi_2(e_{\sigma^i,\sigma^j}) = \phi_2(e_{\sigma^i,\sigma^j \tau})
= \left\{ \begin{array}{ll} (0,0,0) & \text{ if } i+j<n, \\
(1,0,0) & \text{ if } i+j \ge n, \end{array} \right. \]
and $\phi_2(e_{\tau,\tau}) = (0,1,0)$, $\phi_2(e_{\sigma \tau, \sigma \tau})
= (0,0,1)$. The $2$-cocycle $\xi \in Z^2(D_{2n}, L^\times)$ corresponding
to $(b,\eps,\widetilde{\eps})$ is now the unique $2$-cocycle satisfying
\[ \xi_{\sigma^i,\sigma^j} = \xi_{\sigma^i,\sigma^j \tau} = 
\left\{ \begin{array}{ll} 1 & \text{ if } i+j <n, \\ b & \text{ if }
i+j \ge n, \end{array} \right. \]
and $\xi_{\tau,\tau} = \eps$, $\xi_{\sigma\tau,\sigma\tau} = \widetilde{\eps}$.
The cross product algebra associated to $\xi$ is the $K$-algebra with
basis $\{ v_g : g \in D_{2n} \}$ as an $L$-vector space, and multiplication
determined by $v_g v_h = \xi_{g,h} v_{gh}$ and $v_g \lambda = g(\lambda) v_g$
for all $\lambda \in L$. Identifying $v_{\sigma^i} = v^i$ and
$v_{\sigma^i \tau} = v^i w$, we recognise this as the dihedral algebra
$(L/K,b,\eps,\widetilde{\eps})$.

Let $E/K$ be an elliptic curve, and fix a local parameter $t$ at $0
\in E(K)$.  We may describe the isomorphism~\eqref{myisom} in terms of
dihedral algebras as follows.
\begin{Lemma} 
\label{lem:dih}
Let $C/K$ be a smooth curve of genus one with Jacobian $E$, and
suppose $Q \in C(F)$.  Let $P$ be the image of $[\sigma Q - Q]$ under
$\Pic^0(C) \isom E$.  Then the isomorphism~\eqref{myisom} sends the
class of $C$ to the class of $(L/K,g,1,h)$ where $g \in K_1(E)^\times$
and $h \in \widetilde{F}(E)^\times$ have divisors $(P) + (\sigma P) +
\ldots + (\sigma^{n-1}P) - n(0)$ and $(P) + (-P) - 2(0)$, and are
scaled to have leading coefficient $1$ when expanded as Laurent power
series in $t$.
\end{Lemma}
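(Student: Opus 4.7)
The plan is to mirror the proof of Lemma~\ref{lem:cyc}, replacing the standard resolution of $C_n$ by the resolution~\eqref{dihres} of $D_{2n}$. Using~\eqref{dihres}, a $1$-cocycle with values in a $\Gal(L/K)$-module $M$ is a pair $(c_\sigma, c_\tau) \in M^2$ satisfying $N c_\sigma = 0$, $(1+\tau)c_\tau = 0$ and $(1+\sigma\tau)c_\sigma + (\sigma+\tau)c_\tau = 0$, and a $2$-cocycle is a triple $(c^1, c^2, c^3) \in M^3$ with $c^1 \in M^\sigma$, $c^2 \in M^\tau$, $c^3 \in M^{\sigma\tau}$, together with $(\tau+1)c^1 + N c^2 = N c^3$. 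Under~\eqref{Br-isom} the class of the $2$-cocycle $(b,\eps,\widetilde\eps)$ is the class of the dihedral algebra $(L/K, b, \eps, \widetilde\eps)$.

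First I would represent the class of $C$ in $H^1(\Gal(L/K), E(L)) \isom H^1(K,E)$ by the standard cocycle $\rho \mapsto [\rho Q - Q]$; since $\tau Q = Q$, this is determined by $(c_\sigma, c_\tau) = (P, 0)$. Identifying $E \isom \Pic^0 E$ via $T \mapsto (T) - (0)$, I then trace this through the connecting map arising from the short exact sequence $0 \to \Kbar(E)^\times/\Kbar^\times \to \Div E \to \Pic E \to 0$: lifting $(P, 0)$ to the $1$-cochain $((P)-(0),\,0)$ in $\Div(E)(L)^2$ and applying the coboundary dual to $\Delta_2$ yields a $2$-cocycle in $\Div(E)(L)^3$ with entries $\sum_{i=0}^{n-1} (\sigma^i P) - n(0)$, $0$, and $(P) + (-P) - 2(0)$. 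The last entry uses $\sigma\tau P = -P$, a direct consequence of $\tau Q = Q$ and $(\sigma\tau)^2 = 1$. These divisors are principal and equal to $\divv g$, $0$ and $\divv h$, so the image of $[C]$ in $H^2(K, \Kbar(E)^\times/\Kbar^\times)$ is represented by the triple $(g, 1, h)$ modulo $\Kbar^\times$.

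To lift along $H^2(K, \Kbar(E)^\times) \to H^2(K, \Kbar(E)^\times/\Kbar^\times)$, I use the splitting from Section~\ref{sec:galcoh} which sends a Laurent series in $t$ to its leading coefficient; this is exactly the rule that $g$ and $h$ be scaled to have leading coefficient $1$ at $0$. The resulting triple $(g, 1, h)$ represents a class in $\Br(L/K(E))$ provided that $N_{K_1/K}(g) = N_{\widetilde{F}/K}(h)$ in $K(E)^\times$, and verifying this is the main technical step: the two sides have the same divisor by a direct computation from $\tau P = -\sigma^{-1} P$, and since norms preserve the leading-coefficient-$1$ property, the equality then holds on the nose. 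By~\eqref{Br-isom} the resulting class is that of the dihedral algebra $(L/K, g, 1, h)$; by construction it lies in the kernel of $H^2(K, \Kbar(E)^\times) \to H^2(K, \Div E)$, i.e.\ in $\Br(E)$, and is annihilated by $\ev_0$, so it represents the image of $[C]$ under~\eqref{myisom}.
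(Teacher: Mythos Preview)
Your argument is correct and follows essentially the same route as the paper: represent $[C]$ by the $1$-cocycle $((P)-(0),0)$, push through the connecting map via the first column of $\Delta_2$ to get the triple $(g,1,h)$, and lift using the leading-coefficient splitting. One small remark: the norm identity $N_{K_1/K}(g)=N_{\widetilde F/K}(h)$ that you call ``the main technical step'' is in fact automatic, since the leading-coefficient-$1$ section $\Kbar(E)^\times/\Kbar^\times\to\Kbar(E)^\times$ is a Galois-equivariant group homomorphism and therefore preserves the $2$-cocycle condition.
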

\begin{Proof} 
  We have $[\sigma Q - Q] = P$ and $[\tau Q - Q] = 0$.  We identify $E
  \isom \Pic^0(E)$ via $T \mapsto (T)-(0)$. Then the class of $C$ in
  $H^1(K,\Pic E)$ is represented by the pair $((P)-(0),0)$.  Reading
  down the first column of $\Delta_2$, the image of this class under
  the connecting map in~\eqref{les2} is represented by a triple
  $(g,1,h)$ where $\divv g = N_{L/K_1} ((P)-(0))$ and $\divv h =
  (\sigma \tau + 1) ((P)-(0)) = (P) + (-P) - 2(0)$.  Finally to lift
  to an element of $\ker ( \ev_0 : \Br(E) \to \Br(K) )$ we scale $g$
  and $h$ as indicated. 
\end{Proof}

\subsection{Quadric intersections}
We prove Theorem~\ref{thm2} in the case $n=4$.  We are free to make
field extensions of odd degree.  We may therefore suppose that $C_f$
meets the plane $\{x_4 = 0\}$ in four points in general position, and
that one of the three singular fibres in the pencil of quadrics
vanishing at these points is defined over $K$.  In other words, we may
assume that $f_1(x_1,x_2,x_3,0) = q_1(x_1,x_3)$ where $q_1$ is a
binary quadratic form. Then $f_2$ must have a term $x_2^2$, and so by
completing the square $f_2(x_1,x_2,x_3,0) = x_2^2 +
q_2(x_1,x_3)$. Adding a suitable multiple of $f_1$ to $f_2$ we may
suppose that $q_2$ factors over $K$, and so without loss of generality
$q_2(x_1,x_3) = - x_1 x_3$. Making linear substitutions of the form
$x_i \leftarrow x_i + \lambda x_4$ for $i=1,2,3$ brings us to the case
\begin{align*}
f_1(x_1,x_2,x_3,x_4) &= a x_1^2 + b x_1 x_3 + c x_3^2 + (d_1 x_1 + d_2 x_2
+ d_3 x_3 + d_4 x_4) x_4, \\
f_2(x_1,x_2,x_3,x_4) &= x_2^2 - x_1 x_3 - e x_4^2. 
\end{align*} 

Let $L/K$ be the splitting field of $G(X) = a X^4 + b X^2 + c$.  Then
$\Gal(L/K)$ is a subgroup of $D_8$. We suppose it is equal to $D_8$,
the other cases being similar.  We have $L = K(\theta,\sqrt{\delta})$
where $\theta$ is a root of $G$ and $\delta = ac(b^2 - 4ac)$. The
generators $\sigma$ and $\tau$ of $D_8$ act as
\begin{align*}
\sigma : \theta &\mapsto \frac{1}{\sqrt{\delta}} ( ab \theta^3 
  + (b^2 - 2 a c) \theta), & 
\sigma : \sqrt{\delta} &\mapsto \sqrt{\delta}, \\ 
\tau : \theta &\mapsto \theta, & \tau :  \sqrt{\delta} &\mapsto -\sqrt{\delta}.
\end{align*}
The fixed fields of $\sigma$, $\tau$ and $\sigma \tau$ are
$K_1 = K(\sqrt{\delta})$, $F = K(\theta)$ and $\widetilde{F} = K(\phi)$
where $\phi = a(\theta + \sigma(\theta))$.

Let $A = A_f \otimes_Z K(E)$. The second generator $q$ of $A_f$
satisfies $a q^4 + b q^2 + c = 0$. We may therefore embed $F \subset
A$ via $\theta \mapsto q$, and hence $L \subset A_1 = A \otimes_K
K_1$.  We find that $A_1$ is generated as a $K_1(E)$-algebra by $q$
and
\[
v = a (r \sigma(q) - q r) + \frac{a}{\sqrt{\delta}} (a q^3 \sigma(q) 
       - c) (d_1 q + d_2 + d_3 q^{-1})
 \]
 subject to the rules $a q^4 + b q^2 + c = 0$, $v q = \sigma(q) v$ and
 $v^4 = g(\xi,\eta)$, for some $g \in K_1(E)$.  It is therefore the
 cyclic algebra $(L/K_1,g)$. Writing $\xi$ for the element that was
 denoted $\xi + c_0$ in Section~\ref{sec:pf4}, we have
\[ g(\xi,\eta) = 
    \xi^2 - \frac{4 a c d_2}{\sqrt{\delta}} \eta
          + \frac{2 (b m + 2 a c d_2^2)}{b^2 - 4 a c} \xi
          + 8 a c e d_2^2 + \frac{m^2 + d_2^2 n}{b^2 - 4 a c}.
\]
where 
$m = c d_1^2 - b d_1 d_3 + a d_3^2 + (b^2 - 4 a c) d_4$ 
and
$n = b c d_1^2 + a c (d_2^2 - 4 d_1 d_3) + a b d_3^2$.

Let $Q = (\theta^2 : \theta : 1 : 0) \in C_f(F)$, and let $P = [\sigma
Q - Q]$ under the usual identification $\Pic^0(C_f) \isom E$.  We
compute the point $P$ as follows. We put
\[ \begin{pmatrix} Tz_1 \\ z_2 \\ z_1 \\ T z_2 \end{pmatrix}
= \begin{pmatrix} 
  a & \phi & (\phi^2 + ab)/2a & 0 \\
  a & -\phi & (\phi^2 + ab)/2a & 0 \\
  -a d_1 & -a d_2 & - a d_3 & -a d_4 - e \phi^2 \\
  0 & 0 & 0 & 1 
\end{pmatrix} 
\begin{pmatrix} x_1 \\ x_2 \\ x_3 \\ x_4 \end{pmatrix}. \]
Inverting this $4$ by $4$ matrix $M$ gives
\[ (\det M) f_2 (x_1,x_2,x_3,x_4) = \alpha(z_1,z_2) T^2 +
\beta(z_1,z_2)T + \gamma(z_1,z_2) \] for some binary quadratic forms
$\alpha, \beta, \gamma$.  Relacing $f_2$ by $f_1$ gives a scalar
multiple of the same equation.  Therefore $C_f$ has equation $y^2 =
\beta(z_1,z_2)^2 - 4 \alpha(z_1,z_2) \gamma(z_1,z_2)$. The points $Q$
and $\sigma(Q)$ are given by $(z_1:z_2:y) = (1:0:\pm a(\theta -
\sigma(\theta)))$.  Exactly as in Section~\ref{pf2:BQ}, we compute $P$
using the formulae for the covering map. Relative to the Weierstrass
equation for $E$ specified in Section~\ref{sec:pf4}, this point has
$x$-coordinate
\begin{equation}
\label{eqn:xP}
x(P) = \frac{2 a \phi^2 (d_2 d_3 \phi + m) - d_2 
    (d_1 \phi + a d_2) (b \phi^2 + a (b^2 - 4 a c))}{2 a^2 (b^2-4 a c)}
\in \widetilde{F}.
\end{equation}

We find that $P$ and its Galois conjugates are zeros of $g$.
Therefore $\divv g = (P) + (\sigma P) + (\sigma^2 P) + (\sigma^3 P) -
4(0)$.  It follows by Lemma~\ref{lem:cyc} that the class of $A_f$, and
the image of the class of $C_f$ under~\eqref{myisom}, agree after
restricting to $\Br (E \otimes_K K_1)$.  It remains to show that the
same conclusion holds without the quadratic extension.

For $a \in A_1 = A \otimes_K K_1$ let $\overline{a} = (1 \otimes
\tau)a$.  We find that $v \overline{v} =
\xi - x(P)$ where $x(P)$ is given by~\eqref{eqn:xP}.  Now let $A_2 =
A_1 \oplus A_1 w$ with multiplication determined by $w^2 = 1$ and $w a
= \overline{a} w$ for all $a \in A_1$. This is the dihedral algebra
$(L/K,g,1,\xi-x(P))$.  The subalgebra generated by $K_1$ and $w$ is a
trivial cyclic algebra. Therefore $A_2 \isom A \otimes_K \Mat_2(K)$.
In particular $A$ and $A_2$ have the same class in
$\Br(K(E))$. Lemma~\ref{lem:dih} now completes the proof.

\section{Geometric interpretation}
\label{sec:geom}

Let $C$ be a smooth curve of genus one with Jacobian elliptic curve
$E$.  Let $H$ and $H'$ be $K$-rational divisors on $C$ of degree $n
\ge 2$.  We assume that $H$ and $H'$ are {\em not} linearly
equivalent, and so their difference corresponds to a non-zero point $P
\in E(K)$.  The complete linear systems $|H|$ and $|H'|$ define an
embedding $C \to \PP^{n-1} \times \PP^{n-1}$. Assuming $n \in
\{2,3,4\}$, the composite of this map with the first and second
projections is described by genus one models $f$ and $f'$.

In this section we investigate the following problem. 
\begin{quotation} 
Given $f$ and
$P$, how can we compute $f'$? 
\end{quotation}
The answers we give might be viewed as explicitly realising the
connection between the Tate pairing and the obstruction map, as
studied in \cite{descsumI, CON, Zarhin}. Our answers also serve to
motivate the definition of $A_f$, and indeed (however much it might
seem an obvious guess in hindsight) this is how we actually found the
correct definition of $A_f$ in the case $n=4$.

We give no proofs in this section. However all our claims may be verified
by generic calculations.

\subsection{Binary quartics}
The image of $C \to \PP^1 \times \PP^1$ is defined by a $(2,2)$-form,
say
\[ F(x,z; x',z') = f_1(x,z) x'^2 + 2 f_2(x,z) x' z' + f_3(x,z) z'^2. \]
Then $f = f_2^2 - f_1 f_3$, and $f'$ is obtained in the same way, 
after switching the two sets of variables. Thus, given a binary
quartic $f$, we seek to find binary quadratic forms $f_1,f_2,f_3$ such
that
\[ \begin{pmatrix} f_2 & -f_1 \\ f_3 & - f_2 \end{pmatrix}^2
= \begin{pmatrix} f & 0 \\ 0 & f \end{pmatrix}. \] Equivalently, we
look for matrices $M_1, M_2, M_3 \in \Mat_2(K)$ satisfying
\[  (\alpha^2 M_1 + \alpha \beta M_2 + \beta^2 M_3)^2 = f(\alpha,\beta)I_2.\]

This reduces the problem of finding $f'$ from $f$ to that of finding a
$K$-algebra homomorphism $A_f \to \Mat_2(K)$.  By Theorem~\ref{thm1}
any such homomorphism must factor via $A_{f,P}$ for some $0 \not= P
\in E(K)$.  This point $P$ turns out to be the same as the point $P$
considered at the start of Section~\ref{sec:geom}.  In conclusion, if
$A_{f,P} \isom \Mat_2(K)$ and we can find this isomorphism explicitly,
then we can write down a $(2,2)$-form, and hence a binary quartic
$f'$, such that $C_f$ and $C_{f'}$ are isomorphic as genus one curves,
but their hyperplane sections differ by $P$.

\subsection{Ternary cubics}
The image of $C \to \PP^2 \times \PP^2$ is defined by three
$(1,1)$-forms.  The coefficients may be arranged as a $3 \times 3
\times 3$ cube.  As explained in \cite{BH}, slicing this cube in three
different ways gives rise to three ternary cubics. Two of these are
$f$ and $f'$.  Thus, given a ternary cubic $f$, we seek to find
matrices $M_1, M_2, M_3 \in \Mat_3(K)$ satisfying
\[ f(\alpha,\beta,\gamma) = \det(\alpha M_1 + \beta M_2 + \gamma
M_3). \] If $f(0,0,1) \not= 0$ then we may assume (after rescaling $f$
and multiplying each $M_i$ on the left by the same invertible matrix)
that $M_3 = -I_3$. Then $\alpha M_1 + \beta M_2$ has characteristic
polynomial $\gamma \mapsto f(\alpha,\beta,\gamma)$, and so by the
Cayley-Hamilton theorem
\[ f(\alpha,\beta,\alpha M_1 + \beta M_2) = 0. \]

This reduces the problem of finding $f'$ from $f$ to that of finding a
$K$-algebra homomorphism $A_f \to \Mat_3(K)$.  By Theorem~\ref{thm1}
any such homomorphism must factor via $A_{f,P}$ for some $0 \not= P
\in E(K)$.  This point $P$ turns out to be the same as the point $P$
considered at the start of Section~\ref{sec:geom}.  In conclusion, if
$A_{f,P} \isom \Mat_3(K)$ and we can find this isomorphism explicitly,
then we can write down a $3 \times 3 \times 3$ cube, and hence a
ternary cubic $f'$, such that $C_f$ and $C_{f'}$ are isomorphic as
genus one curves, but their hyperplane sections differ by $P$.

\subsection{Quadric intersections}
The image of $C \to \PP^3 \times \PP^3$ is defined by an
$8$-dimensional vector space $V$ of $(1,1)$-forms in variables
$x_1,\ldots,x_4$ and $y_1, \ldots, y_4$. Let $W$ be the vector space
of $4$ by $4$ alternating matrices $B = (b_{ij})$ of linear forms in
$y_1, \ldots,y_4$ such that
\[ \sum_{i=1}^4 x_i b_{ij} (y_1, \ldots, y_4) \in V \quad \text{ for
  all } j= 1, \ldots, 4. \] We find that $W$ is $4$-dimensional. We
choose a basis, and let $M$ be a generic linear combination of the
basis elements, say with coefficients $z_1, \ldots, z_4$. Then $M =
(m_{ij})$ is a $4$ by $4$ alternating matrix of $(1,1)$-forms in $y_1,
\ldots,y_4$ and $z_1, \ldots,z_4$. The Pfaffian of this matrix is a
$(2,2)$-form, which turns out to be
\[ f_1^+(y_1,\ldots,y_4)f_2^-(z_1,\ldots,z_4) - f_2^+(y_1,\ldots,y_4)
f_1^-(z_1,\ldots,z_4), \] where $f^{\pm} = (f_1^\pm,f_2^\pm)$
describes the image of $C \to \PP^3$ via $|H^\pm|$, and $[H-H^\pm] =
\pm P$.  To tie in with our earlier notation, $H^+ = H'$ and $f^+ =
f'$.

We write $m_{ij} = (y_1, \ldots ,y_4) M_{ij} (z_1, \ldots, z_4)^T$
where $M_{ij} \in \Mat_4(K)$. Assuming $C_f$ does not meet the line
$\{x_3 = x_4 = 0\}$ we have $\det(M_{12}) \not= 0$, and so we may
choose our basis for $W$ such that $M_{12} = I_4$. The matrices
$M_{ij}$ then satisfy $f_i(\alpha M_{23} + \beta M_{24}, - (\alpha
M_{13} + \beta M_{14}), \alpha,\beta) = 0$ for $i=1,2$, where the
first two arguments commute, and $M_{34} = M_{13} M_{24} - M_{23}
M_{14} = M_{24} M_{13} - M_{14} M_{23}$.

This reduces the problem of finding $f'$ from $f$ to that of finding a
$K$-algebra homomorphism $A_f \to \Mat_4(K)$.  By Theorem~\ref{thm1}
any such homomorphism must factor via $A_{f,P}$ for some $0 \not= P
\in E(K)$. Again this point $P$ turns out to correspond to the
difference of hyperplane sections for $C_f$ and $C_{f'}$.


\begin{thebibliography}{MM}

\frenchspacing
\renewcommand{\baselinestretch}{1}

\bibitem{AKM3P}
S.Y. An, S.Y. Kim, D.C. Marshall, S.H. Marshall,
W.G. McCallum and A.R. Perlis,
Jacobians of genus one curves,
{\em J. Number Theory} {\bf {90}} (2001), no. 2, 304--315. 

\bibitem{ARVT}
M. Artin, F. Rodriguez-Villegas and J. Tate, 
On the Jacobians of plane cubics,
{\em Adv. Math.} {\bf {198}} (2005), no. 1, 366--382.

\bibitem{BH} 
M. Bhargava and W. Ho, 
Coregular spaces and genus one curves, {\em Camb. J. Math.} {\bf{4}}
(2016), no. 1, 1--119.

\bibitem{magma}
W. Bosma, J. Cannon and C. Playoust, 
The Magma algebra system I: The user language, {\em J. Symb. Comb.} {\bf{24}}, 
235--265 (1997). 
\url{http://magma.maths.usyd.edu.au/magma/}

\bibitem{CK}
M. Ciperiani and D. Krashen, 
Relative Brauer groups of genus 1 curves,
{\em Israel J. Math.} {\bf 192} (2012), no. 2, 921--949.

\bibitem{descsumI}
J.E. Cremona, T.A. Fisher, C. O'Neil, D. Simon and M. Stoll, 
Explicit $n$-descent on elliptic curves, 
I Algebra, {\em J. reine angew. Math.}  {\bf 615} (2008) 121--155.

\bibitem{minred}
J.E. Cremona, T.A. Fisher and M. Stoll, 
Minimisation and reduction of 2-, 3- and 4-coverings of elliptic curves,
{\em Algebra \& Number Theory} {\bf 4} (2010), no. 6, 763--820.

\bibitem{Creutz}
B. Creutz, 
Relative Brauer groups of torsors of period two,
{\em J. Algebra} {\bf 459} (2016), 109--132.

\bibitem{FN}
T.A. Fisher and R.D. Newton, 
Computing the Cassels-Tate pairing on the 3-Selmer group of an
elliptic curve, {\em Int. J. Number Theory} {\bf 10} (2014), no. 7,
1881--1907.

\bibitem{HH}
D. Haile and I. Han, 
On an algebra determined by a quartic curve of genus one,
{\em J. Algebra} {\bf 313} (2007), no. 2, 811--823.

\bibitem{HHW}
D.E. Haile, I. Han and A.R. Wadsworth, 
Curves $C$ that are cyclic twists of $Y^2=X^3+c$ and the relative 
Brauer groups $\Br(k(C)/k)$,
{\em Trans. Amer. Math. Soc.} {\bf 364} (2012), no. 9, 4875--4908.

\bibitem{Kuo}
J.-M. Kuo, 
On an algebra associated to a ternary cubic curve. 
{\em J. Algebra} {\bf 330} (2011), 86--102.

\bibitem{Kuo2}
J.-M. Kuo, 
On cyclic twists of elliptic curves of period two or 
three and the determination of their relative Brauer groups,
{\em J. Pure Appl. Algebra} {\bf 220} (2016), no. 3, 1206--1228.

\bibitem{L}
S. Lichtenbaum, 
Duality theorems for curves over $p$-adic fields.
{\em Invent. Math.} {\bf{7}} (1969) 120--136.

\bibitem{CON}
C. O'Neil, 
The period-index obstruction for elliptic curves.
{\em J. Number Theory} {\bf 95} (2002), no. 2, 329--339.

\bibitem{Wall}
C.T.C. Wall,
Resolutions for extensions of groups,
{\em Proc. Cambridge Philos. Soc.} {\bf{57}} (1961) 251--255.

\bibitem{Wittenberg}
O. Wittenberg, 
Transcendental Brauer-Manin obstruction on a pencil of elliptic curves,
Arithmetic of higher-dimensional algebraic varieties (Palo Alto, CA, 2002), 
259--267, Progr. Math., 226, Birkh\"auser Boston, Boston, MA, 2004.
 
\bibitem{Zarhin}
Ju. G. Zarhin, 
Noncommutative cohomology and Mumford groups. 
{\em Mat. Zametki} {\bf 15} (1974), 415--419; English translation:  
{\em Math. Notes} {\bf 15} (1974), 241--244. 

\end{thebibliography}
\end{document}